\newtheorem{theorem}{Theorem}[section]
\newtheorem{proposition}[theorem]{Proposition}
\newtheorem{lemma}[theorem]{Lemma}
\theoremstyle{definition}
\newtheorem{defn}[theorem]{Definition}
\theoremstyle{remark}
\newtheorem{remark}[theorem]{Remark}
\newtheorem{example}[theorem]{Example}
\numberwithin{equation}{section}
\numberwithin{figure}{section}
\title{The Calculation and Simulation of the Price of Anarchy for Network Formation Games}
\author{Shaun Lichter, Christopher Griffin, and Terry Friesz}
\date{\today}
\begin{document}
\maketitle
\begin{abstract}
We model the formation of networks as the result of a game where by players act selfishly to get the portfolio of links they desire most.  The integration of player strategies into the network formation model is appropriate for organizational networks because in these smaller networks, dynamics are not random, but the result of intentional actions carried through by players maximizing their own objectives.  This model is a better framework for the analysis of influences upon a network because it integrates the strategies of the players involved.  We present an Integer Program that calculates the price of anarchy of this game by finding the worst stable graph and the best coordinated graph for this game.  We simulate the formation of the network and calculated the simulated price of anarchy, which we find tends to be rather low.
\end{abstract}

\section{Introduction}
In recent years, there has been extensive efforts to collect large amounts of data related to underlying social networks in order to characterize the social network in some useful way \cite{Doz11}. Social network analysis (and its various branches) is a large subject with several devoted texts e.g., \cite{CSW05,KY08}. It is impossible to provide a complete literature survey in a short space and most of the papers are not necessarily germane to the topics discussed in this paper. Nevertheless, we provide a small sampling of the literature to illustrate the breadth of research in social network analysis. \cite{AH10} suggests methods for predicting future events based on social media, while \cite{BDK07} is specifically interested in the identification of specific anonymous individuals within a given social network. \cite{FCF11} investigates the problem of mathematically modeling crowd-sourcing. This work is extended in \cite{OT11}. There is substantial interest from the statistical physics community in this problem in the form of Network Science (see the sequel). \cite{JGN01} investigate models for the growth of social networks. Very little of this work uses game theoretic principles as we do in this paper, however, \cite{SSG11} investigates the problem of a user deciding to join a social network from a game theoretic perspective.

A specific application of this analysis is to aid in the discovery of a malicious network (e.g. an organized crime network). Interested readers can see \cite{YD08} for a more commercial application of the same techniques. These methods are focused on the discovery of a subnetwork with particular characteristics, but they do not offer insight into the analysis of potential courses of actions that may be taken to influence the network.  For example, suppose that we consider the small five player  organization shown in Figure \ref{fig:influence}.
\begin{figure}[ht]
\centering
\includegraphics[scale=0.30]{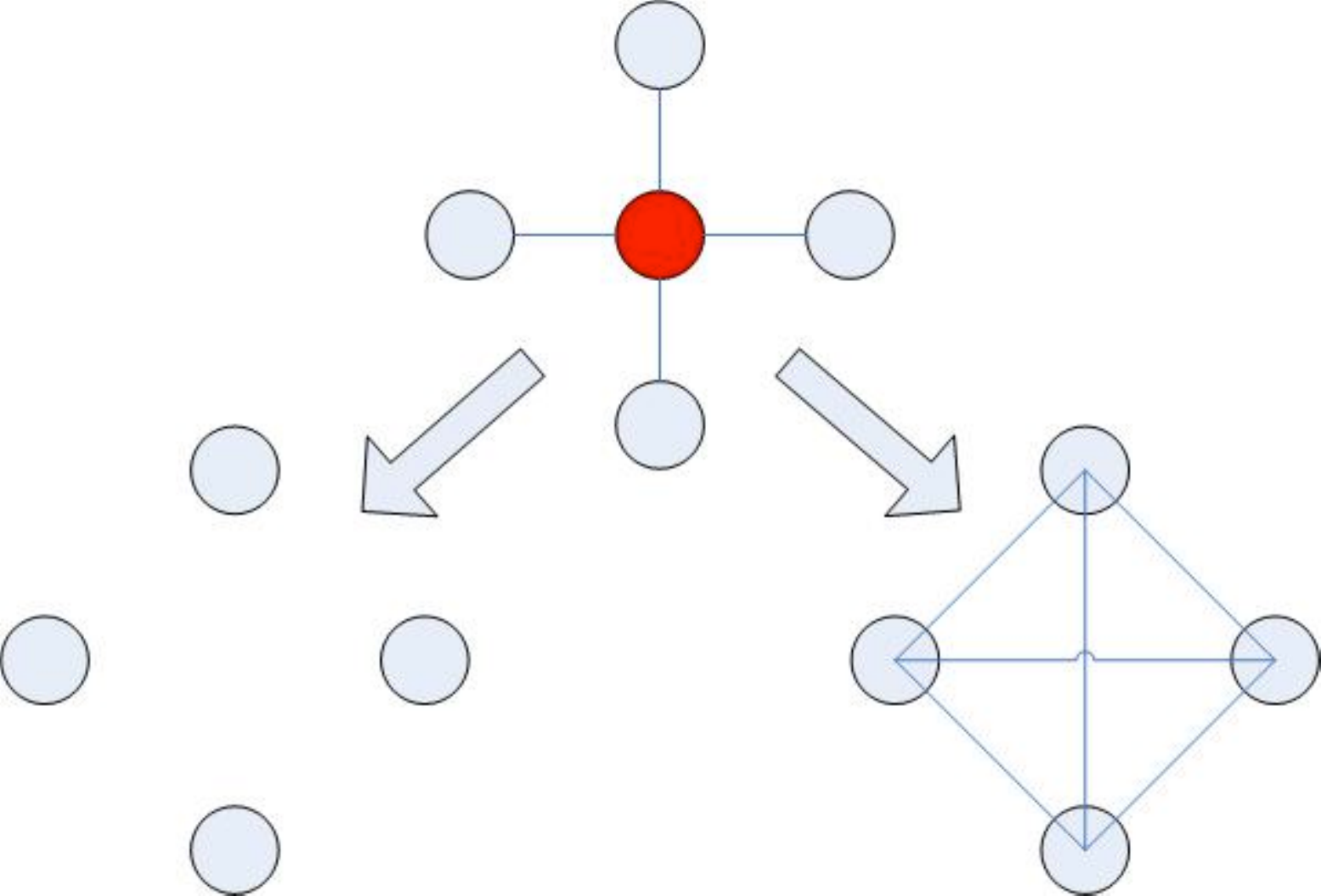}
\caption{What will happen if the red node is removed?  Will the network dissolve (left) or reform (right) }
\label{fig:influence}
\end{figure}
In this organization, the red node is the center of the organization.  It seems intuitive that the center of the organization should be attacked in some way (e.g. captured or killed).  However, it is unclear whether the organization will crumble as shown on the left by a set of disconnected nodes or if the graph will reform as a stronger network, as shown on the right.  Let us instead consider a typical firm rather than a crime organization.  If the CEO is removed for some reason (e.g. retires, is arrested, or dies), the firm may encounter a drop in the stock price or market shares as an indication of some struggling, however, it is very unlikely that it will collapse.  If the firm does collapse, it will not be quick or as a sole consequence of this event.  The firm will likely reorganize by replacing the CEO in some manner. Sometimes this benefits the firm as the new leader or reorganized organization is better than before and yet other times, the firm may encounter a steady decline under the new weaker leadership.  It is our position that the analysis of player interaction in a crime organization should use the same methodologies as those for analyzing the interactions of players in other organizational environments.

In this paper, we take the point of view that players in such a network will make strategic decisions on with whom to connect. These decisions may reflect instructions from a central authority, but it is the player who acts in his own best interest to carry these instructions out.  Using a game theoretic model of players and how they connect, will allow us to model the reformation of a network as a result of an influential act upon it.  In this respect, this approach seeks to shed light on the problem facing authorities after they discover such a malicious network. To our knowledge, no organization has attempted to use game theoretic techniques to study the network formation or reformation process to inform policy on kill or capture operations.

While a game theoretic approach allows the analysis of influence on a network, it requires knowledge of the value functions of the players involved.  It is not possible to have perfect knowledge of these value functions and we defer a discussion of this problem to future work. To be sure, application of this technique requires that player objective functions be estimated. However, such a requirement is not inconsistent with current operational requirements for identifying and targeting members of organizations.



\section{Related Literature from the Physics Community}
The Network Science community has largely dedicated its efforts to the exposition and analysis of topological properties that occur in several real-world networks; e.g., scale-freeness \cite{barabasi1999a, newman2003, dorogovtsev2002, albert2002}. Recently, there has been interest in showing that these topological properties may arise as a result of optimization, rather than some immutable physical law \cite{doyle2005}. As Doyle et al. \cite{doyle2005} point out, various networks such as communications networks and the Internet are designed by engineers with some objectives and constraints. While it is true that there is often not a single designer in control of the entire network, the network does not \emph{naturally} evolve without the influence of \emph{designers}.  In each application, the network structure must be feasible with respect to some physical constraints corresponding to the tolerances and specifications of the equipment used in the network.  For example, in a system such as the world wide web, a single web-page might have billions of connections, however it is not possible for a node to have such a degree in many other applications, such as collaboration or road networks.  Certainly the structure of the network has a significant impact on its [the network's] ability to function, its evolution, and its robustness.  However network structures often arise as a result (locally) of optimized decision making among a single agent or multiple competitive or cooperative agents, who take network structure and function into account as a part of a collection of constraints and objectives.
Recently, network formation has been modeled from a game theoretic perspective \cite{myerson1977,jackson1996,dutta1997,goyal2003}, and in \cite{LichterGriffinFriesz2011}, it was shown that there exists games that result in the formation of a stable graph with an arbitrary degree sequence.  In \cite{LGF11}, it was shown that a network with an arbitrary degree sequence may result as a pairwise stable graph with link bias in the player strategies.  In this paper, we show that we may formulate an optimization problem to measure the price of anarchy of stable graphs for this network formation game.

\section{Preliminary Notation and Definitions}
Let $N=\{1,2, \ldots n\}$ be the set of nodes (vertices) in a graph.  Following the graph formation game literature \cite{bala2000,goyal2003,goyal2006,jackson2005}, a graph is denoted as $g$ and represents a set of links (edges) (where a link is a subset of $N$ of size two). This notation is consistent with the notion of a simple graph (i.e., a graph with out multiple edges or self-loops) in the standard graph theory literature; see, e.g., \cite{Dies10}.   In this paper, it will be more convenient to denote a graph as $\mathbf{x}=\langle x_{ij} \rangle$ where:\begin{equation}
x_{ij} =x_{ji} =
\begin{cases}
1 & \text{if node $i$ is linked to node $j$}\\
0 & \text{else}
\end{cases}
\label{eqn:xdef1}
\end{equation}
We assume $x_{ii} = 0$ for all $i \in N$.
Note $\mathbf{x}$ is the symmetric adjacency matrix \cite{GR01} for a graph $g$ and we consider \textit{only} non-directed simple graphs. Denote $X$ as the set of all graphs over the node set $N$, that is, $X=\{\mathbf{x} : \mathbf{x} \in \{0,1\}^{n \times n} \}$.

Let $\eta_{i}(\mathbf{x}):X \rightarrow \mathbb{R}$ denote the degree of node $i$ in graph $\mathbf{x}$, which is computed:
\begin{equation}
\eta_{i}(\mathbf{x})=\sum_{j}x_{ij}=x_{i\cdot}\mathbf{e}
\label{eqn:eta_i_def}
\end{equation}
where $\mathbf{e}$ is an n-dimensional column vector filled with $1$'s, that is $\mathbf{e}=(1,1, \ldots 1)^T \in \mathbb{R}^{n \times 1}$.  Denote $\eta(\mathbf{x}):X \rightarrow \mathbb{R}^n$ as the degree sequence of the graph $\mathbf{x}$; i.e., $\eta(\mathbf{x})=(\eta_{1}(\mathbf{x}),\eta_{2}(\mathbf{x}),\ldots, \eta_{n}(\mathbf{x}))$, which can be computed:
\begin{equation}
\eta(\mathbf{x})=\mathbf{x} \mathbf{e}
\label{eqn:eta_def}
\end{equation}
Note, we break slightly with the notation in the literature. In (e.g.) \cite{GY05} a degree sequence is listed in descending order. We do not have this requirement as we we will be interested in conditions where certainly player's in a graph formation have a certain desired degree.

Let $[\mathbf{x}]_{\eta}$ be the equivalence class of graphs with the same degree sequence as $\mathbf{x}$.  A degree sequence $\mathbf{d}=(d_1,\dots,d_n)$ on $n$ nodes is \emph{graphical}, if there exists a graph with $n$ nodes and degree sequence $\mathbf{d}=(d_1,\dots,d_n)$. The $\ell_l$-norm between two degree sequences $\mathbf{d}=(d_1,\dots,d_n)$ and $\mathbf{k}=(k_1,\dots,k_n)$ is given as:
\begin{equation}
\|\mathbf{d}-\mathbf{k} \|_1=\sum^{n}_{i} |d_{i}-k_{i}|
\end{equation}
We define a graph $\mathbf{x}$ to be \emph{closest} in $\ell_l$ norm to degree sequence $\mathbf{d}$ if it is a graph in $X$ with a degree sequence that has the minimum $\ell_l$ norm distance to $\mathbf{d}$  of all graphs in $X$. Naturally, this graph may not be unique. Formally, $\mathbf{x}$ is a closest graph to degree sequence $\mathbf{d}$ if:
\begin{equation}
\label{eqn: defn closest}
\|\eta(\mathbf{x})-\mathbf{d}\|_{1}=\min_{\mathbf{x}' \in X} \|\eta(\mathbf{x}')-\mathbf{d}  \|_{1}
\end{equation}


\section{Network Game Notational Preliminaries}
In the network formation game, each player selects a strategy that consists of a set of preferences for with whom they would like to form a link.  The preference of each player toward each link is represented in a matrix $\mathbf{s}$ where $s_{ij}$ denotes the preference for node $i$ to link with node $j$:
\begin{equation}
s_{ij} =
\begin{cases}
1 & \text{if Player $i$ desires to link with Player $j$}\\
0 & \text{else}
\end{cases}
\label{eqn:sdef}
\end{equation}
We define $s_{ii} = 0$ to be consistent with our assumptions from the previous section. Player $i$ will select strategy $\mathbf{s}_{i \cdot}$ from the set of all possible strategies $S_{i}=\{0,1\}^{n-1}$.  In this game, each player has the ability to veto a link.  That is, the formation of a link requires participation by both players as is consistent with many social networks.  Hence each Player $i$ selects strategy $\mathbf{s}_{i \cdot}$ and all together a strategy $\mathbf{s}=[s_{i \cdot}]_{i \in N}$ is identified from $S=S_{1} \times S_{2} \times \cdots \times S_{n}$.  As a result of the strategy chosen, a simple graph $\mathbf{x}$ is formed:
\begin{equation}
x_{ij} =x_{ji} =
\begin{cases}
1 & \text{if $s_{ij} =s_{ji} =1$}\\
0 & \text{else}
\end{cases}
\label{eqn:xdef}
\end{equation}

The value of a graph $\mathbf{x}$ is the total value produced by agents in the graph; we denote the value of a graph as the function $v:X \rightarrow \mathbb{R}$ and the set of of all such value functions as $V$. An allocation rule $Y:V \times X \rightarrow \mathbb{R}^{n}$ distributes the value $v(\mathbf{x})$ among the agents in $N$.  Denote the value allocated to agent $i$ as $Y_{i}(v,\mathbf{x})$.  Since, the allocation rule must distribute the value of the network to all players, it must be \textit{balanced}; i.e., $\sum_{i} Y_{i}(v,\mathbf{x})=v(\mathbf{x})$ for all $(v,\mathbf{x}) \in V \times X$.  The allocation rule governs how the value is distributed and thus makes a significant contribution to the model.  Denote the game $\mathcal{G}= \mathcal{G}(v,Y,N)$ as the game played with value function $v$ and allocation rule $Y$ over nodes $N$. In this game, each agent (node) is a selfish profit maximizer who chooses strategy $\mathbf{s}_{i \cdot}$ to maximize there allocated payoff under allocation rule $Y$ and value function $v$.

Jackson and Wolinksy  \cite{jackson1996} suggest that  Nash Equilibrium analysis for this game may lead to inconsistencies between intuitive expected behavior and equilibrium behavior.  A Nash Equilibrium is defined as any solution such that no player can unilaterally change strategies to benefit himself.  For any potential link between two nodes $i$ and $j$, the solution with no link (i.e., $x_{ij}=0$) is an equilibrium even if both players could benefit from the link. This is because each player has link veto power and therefore, no player can \textit{unilaterally} create a link with any other player. 

In order to provide a more intuitive equilibrium concept for this model, Jackson and Wolinksy use \textit{pairwise stability} to model stable networks without the use of Nash equilibria \cite{jackson1996}.  We will first denote $\mathbf{x}+ij$ as the graph $\mathbf{x}$ with the additional link $ij$ and $\mathbf{x}-ij$ as the graph $\mathbf{x}$ without the link $ij$.
\begin{defn}
A network $\mathbf{x}$ with value function $v$ and allocation rule $Y$ is pairwise \textit{stable} if (and only if):
\begin{enumerate*}
\item for all $i,j$ if $x_{ij} = 1$, $Y_{i}(v,\mathbf{x}) \geq Y_{i}(v,\mathbf{x} - ij)$ and
\item for all $i,j$ if $x_{ij} = 0$ then, if $Y_{i}(v,\mathbf{x}+ij) > Y_{i}(v,\mathbf{x})$, then $Y_{j}(v,\mathbf{x}+ij) < Y_{j}(v,\mathbf{x})$
\end{enumerate*}
\end{defn}
Pairwise stability implies that in a stable network, for each link that exists, (1) both players must benefit from it and (2) if a link can provide benefit to both players, then it, in fact, must exist.  Jackson notes that pairwise stability may be too weak because it does not allow groups of players to add or delete links, only pairs of players \cite{jackson2003}.  Deletion of multiple links simultaneously has been considered in Belleflamme and Bloche \cite{belleflamme2004}.
Previously, in \cite{LichterGriffinFriesz2011} we extend work in \cite{jackson2008} and \cite{dutta1997}, showing that stable networks may be formed as a result of a link formation game with an arbitrary (desired) degree sequence. This work presupposes that we embed the degree sequence (implicitly or explicityly) into the objective functions of the players.

\begin{theorem}
Let $f:\mathbb{R} \rightarrow \mathbb{R}$ be a nonnegative convex function with minimum at $0$ and let $\mathbf{d} = (d_1,\dots,d_n)$ be a degree sequence. In $\mathcal{G}(v,Y,N)$, assume that Player $i$ maximizes objective $Y_{i}(\mathbf{x})=-f_{i}(\eta_{i}(\mathbf{x}))=-f(\eta_{i}(\mathbf{x})-d_{i})=-f(\sum_{j}x_{ij}-d_{i})$. 
Assume that $v$ is the balanced value function induced from the allocation rule $Y = (Y_1,\dots,Y_n)$. If $\eta^{-1}(\mathbf{d})$ is non-empty (i.e., $\mathbf{d}$ is graphical) then any graph $\mathbf{x}$ such that $\eta(\mathbf{x}) = \mathbf{d}$ is pairwise stable for the the game $\mathcal{G}(v,Y,N)$.
\label{thm:GeneralArbitraryDist}
\end{theorem}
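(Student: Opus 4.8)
The plan is to exploit the fact that the hypotheses on $f$ force each player's payoff to be individually maximized precisely when that player's degree equals its target $d_i$. Since $f$ is nonnegative with minimum value $f(0) = 0$, the map $t \mapsto -f(t)$ attains its global maximum $0$ at $t = 0$; consequently $Y_i(\mathbf{x}) = -f(\eta_i(\mathbf{x}) - d_i) \leq 0$ for every graph, with equality exactly when $\eta_i(\mathbf{x}) = d_i$. First I would record this observation and note that for any $\mathbf{x}$ with $\eta(\mathbf{x}) = \mathbf{d}$ we have $Y_i(\mathbf{x}) = 0$ for all $i$; that is, every player simultaneously sits at its own best attainable payoff.

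With this in hand, the two pairwise-stability conditions reduce to short degree computations, because $Y_i$ depends on $\mathbf{x}$ only through $\eta_i(\mathbf{x})$, and adding or deleting the link $ij$ changes only the degrees of $i$ and $j$ (each by one). For condition (1), suppose $x_{ij} = 1$. Deleting $ij$ lowers $\eta_i$ to $d_i - 1$, so $Y_i(\mathbf{x} - ij) = -f(-1) \leq 0 = Y_i(\mathbf{x})$; hence $Y_i(\mathbf{x}) \geq Y_i(\mathbf{x} - ij)$, and the identical computation with the roles of $i$ and $j$ exchanged (which is covered by the universal quantifier over pairs) handles player $j$. For condition (2), suppose $x_{ij} = 0$. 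Adding $ij$ raises $\eta_i$ to $d_i + 1$, so $Y_i(\mathbf{x} + ij) = -f(1) \leq 0 = Y_i(\mathbf{x})$; thus the hypothesis $Y_i(\mathbf{x} + ij) > Y_i(\mathbf{x})$ appearing in the implication of (2) is never met, and the implication holds vacuously. Since both conditions hold for every pair $i,j$, the graph $\mathbf{x}$ is pairwise stable.

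I do not anticipate a genuine obstacle here: the argument is essentially the remark that when every player is already at a global individual optimum, no unilateral deletion can help and no joint addition can strictly help either party, so pairwise stability is automatic. The only points demanding care are bookkeeping ones --- confirming that modifying link $ij$ perturbs only the two relevant degrees, and that the balanced value function $v$ induced by $Y$ plays no role beyond guaranteeing the game is well defined. It is worth flagging that the proof invokes only the nonnegativity of $f$ together with $f(0) = 0$; the convexity of $f$ is not actually used, so the stated hypothesis is stronger than what this particular result requires (presumably convexity is retained for consistency with, or for use in, the later price-of-anarchy analysis).
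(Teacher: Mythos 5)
Your proof is correct. Note that the paper does not actually prove this theorem in-house---its ``proof'' is the single line ``See \cite{LichterGriffinFriesz2011}''---so there is no in-paper argument to compare against; yours is the natural self-contained one, and it matches the structure one would expect: every player with $\eta_i(\mathbf{x})=d_i$ sits at an individual optimum of $t\mapsto -f(t-d_i)$, so deleting a link can only weakly decrease $Y_i$ (condition (1) of pairwise stability), and adding a link can never strictly increase $Y_i$, so condition (2) holds vacuously. One small slip worth fixing: you infer that the minimum \emph{value} is $f(0)=0$ from nonnegativity plus ``minimum at $0$,'' but nonnegativity does not force $f(0)=0$ (take $f(t)=t^{2}+1$, which satisfies every stated hypothesis). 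This does not damage the proof, because your inequality chains never truly need the value $0$; they need only $f(t)\geq f(0)$ for $t=\pm 1$, which ``minimum at $0$'' gives directly. Replacing ``$\,{}\leq 0 = Y_i(\mathbf{x})$'' by ``$\,{}\leq -f(0) = Y_i(\mathbf{x})$'' in both computations makes the argument airtight under the literal hypotheses. Your closing observation is also right and worth keeping: convexity is never invoked---only the fact that $f$ is minimized at the argument $0$---and is presumably carried in the hypotheses for consistency with the later price-of-anarchy development, where the paper specializes to $f(t)=|t|$.
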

\begin{proof}
See \cite{LichterGriffinFriesz2011}.
\end{proof}

\section{Link Bias}
The model presented in Theorem \ref{thm:GeneralArbitraryDist} requires that each player have a similar objective function and, essentially, a desired degree -- or a special number of individuals to whom he wishes to connect. While this is a good first model for many applications, and establishes the relative ease with which Network Science metrics can be incorporated into a game theoretic framework, in real-world situations each player usually has a bias toward linking with a specific set of players rather than others.  In this section, we incorporate a player's preference to link to one player over another.  The incorporation of link bias does not change the fact that given any degree sequence, there exists a game that will result in a pairwise stable graph with such a degree sequence \cite{LGF11}.

Assume each player \textit{minimizes} a cost $f_{i}:X \rightarrow \mathbb{R}$. In this framework, we assume that $Y_i(\mathbf{x}) = -f_{i}(\mathbf{x})$ and the player maximizes $Y_i(\mathbf{x})$. Consistent with Theorem  \ref{thm:GeneralArbitraryDist}, we assume the value function $v$ in $\mathcal{G}(v,Y,N)$ is defined implicitly from the $Y_i(\mathbf{x})$ ($i=1,\dots,n$) so that $v$ is balanced.  For the remainder of this section we assume a linear cost function:
\begin{equation}
f_{i}(\mathbf{x})=\sum_{j}c_{ij}x_{ij}
\end{equation}
As in the prior model a link between players $i$ and $j$ exists if and only if the two players decide to link. That is, a player may \textit{unilaterally} reject a link.
\subsection{Stability}
In this model, Player $i$ will benefit from linking with any Player $j$ whenever $c_{ij}<0$ and Player $i$ will be penalized for linking with any Player $j$ whenever $c_{ij}>0$.  We may have an unspecified behavior when $c_{ij} = c_{ji} = 0$. In this case it will neither help nor hinder either player to establish a link. To remove this possibility, we may assume link parsimony. That is, we will assume that a link is established if and only if both players benefit in some way. The stability condition becomes:
\begin{defn}
A network $\mathbf{x}$ is pairwise \textit{stable} if (and only if):
\begin{enumerate*}
\item for all $i,j$: if $x_{ij} =1 $, then $c_{ij} < 0$ and $c_{ji} < 0$
\item for all $i,j$: if $x_{ij} = 0$, then if $c_{ij} \geq 0$ then $c_{ji} \leq 0$
\end{enumerate*}
\end{defn}
We show in \cite{LGF11}, that given an arbitrary degree sequence $\mathbf{d} = (d_1,\dots,d_n)$ we may construct a cost matrix $\mathbf{c}$, whose components are the objective function coefficients $c_{ij}$, such that the resulting graph formation game has as a (pairwise) stable solution a graph with degree sequence $\mathbf{d}$. The construction of $\mathbf{c}$ is done by an optimization problem.
\begin{defn}
As in the previous section, the preference of Player $i$ to link with Player $j$ is indicated via $\mathbf{s}_{ij}$ where:
\begin{equation*}
s_{ij} = \begin{cases}
1 & \text{if player } i \text{ can benefit from link } ij\\
0 & \text{if player } i \text{ cannot benefit from link } ij
\end{cases}
\end{equation*}
\end{defn}
\begin{remark}
Specifically, $s_{ij}$ is the boolean mapping of $c_{ij}$:
\begin{equation}
s_{ij} = \begin{cases}
1 & \text{if } c_{ij}<0\\
0 & \text{if } c_{ij} \geq 0
\end{cases}
\label{defn: psi}
\end{equation}
\end{remark}

\begin{lemma}
\label{lem: stability}
A graph  $\mathbf{x} = \langle{x_{ij}}\rangle$ is stable if and only if it satisfies the following constraints:
\begin{equation}
\label{eqn: stability constraints0}
\begin{alignedat}{2}
x_{ij}&=x_{ji} &\quad& \forall \; ij\\
s_{ij}+s_{ji}-1 &\leq x_{ij} && \forall \; ij \\
x_{ij} &\leq s_{ij}  && \forall \; ij \\
x_{ij} &\leq s_{ji}  && \forall \; ij \\
x_{ij}, s_{ij} &\in \{0,1\} && \forall \; ij
\end{alignedat}
\end{equation}
\label{lem:BiasStable}
\end{lemma}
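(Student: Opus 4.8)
The plan is to translate both clauses of the pairwise stability definition into the language of the preference variables $s_{ij}$ using the boolean mapping in (\ref{defn: psi}), and then to verify that, over binary variables, the resulting logical conditions are captured exactly by the linear inequalities in (\ref{eqn: stability constraints0}). Throughout I would treat the symmetry constraint $x_{ij} = x_{ji}$ and the integrality constraints $x_{ij}, s_{ij} \in \{0,1\}$ as structural: every network produced by the game is an undirected simple graph, so these hold for any admissible $\mathbf{x}$, and it is precisely the restriction to $\{0,1\}$ that lets me replace the underlying ``logical AND'' by the affine expressions appearing in the constraint block.

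First I would record the mapping dictionary: by (\ref{defn: psi}), ``Player $i$ can benefit from link $ij$'' is equivalent to $s_{ij} = 1$, which is in turn equivalent to $c_{ij} < 0$, and likewise for $j$. With this in hand, stability clause~1 --- if $x_{ij} = 1$ then $c_{ij} < 0$ and $c_{ji} < 0$ --- reads simply as the implication that $x_{ij} = 1$ forces $s_{ij} = 1$ and $s_{ji} = 1$. Over $\{0,1\}$ this single implication is equivalent to the conjunction of the two inequalities $x_{ij} \le s_{ij}$ and $x_{ij} \le s_{ji}$: when $x_{ij} = 0$ both hold vacuously, and when $x_{ij} = 1$ they force $s_{ij} = s_{ji} = 1$. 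This establishes the equivalence of clause~1 with the third and fourth lines of (\ref{eqn: stability constraints0}).

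Next I would treat stability clause~2, the link-parsimony requirement that a non-link $x_{ij} = 0$ cannot persist when both endpoints would benefit. Under the mapping this says that $x_{ij} = 0$ precludes $s_{ij} = s_{ji} = 1$, equivalently that $s_{ij} = s_{ji} = 1$ forces $x_{ij} = 1$. Over $\{0,1\}$ the affine inequality $s_{ij} + s_{ji} - 1 \le x_{ij}$ is exactly this implication: the left-hand side is positive only when $s_{ij} = s_{ji} = 1$, in which case it equals $1$ and forces $x_{ij} = 1$, and otherwise it is $\le 0 \le x_{ij}$ and imposes nothing. This matches clause~2 with the second line of (\ref{eqn: stability constraints0}). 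Assembling the two directions, stable $\Rightarrow$ feasible and feasible $\Rightarrow$ stable, then follows by reading each of the two equivalences forwards and backwards for every pair $ij$.

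The step I expect to require the most care is the faithful translation of clause~2. The map (\ref{defn: psi}) sends the boundary case $c_{ij} = 0$ to $s_{ij} = 0$ (no benefit), so the parsimony convention --- a link forms only when both players \emph{strictly} benefit --- is what makes ``$s_{ij} = s_{ji} = 1 \Rightarrow x_{ij} = 1$'' the correct reading of clause~2 rather than a weak-inequality variant; I would flag this convention explicitly so that the equivalence with $s_{ij} + s_{ji} - 1 \le x_{ij}$ is tight. The remaining checks --- symmetry, integrality, and the vacuous cases --- are routine.
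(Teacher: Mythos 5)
Your proposal is correct, but note that there is no in-paper proof to compare it against: the paper's ``proof'' of Lemma~\ref{lem:BiasStable} is simply a citation to \cite{LGF11}. What you give is the natural self-contained argument, and it matches the style the paper uses where it \emph{does} prove the analogous degree-sequence result (Lemma~\ref{lem:StabilityIP}): translate each stability clause into a propositional condition on binary variables, then check that over $\{0,1\}$ the implication $x_{ij}=1 \Rightarrow (s_{ij}=1 \wedge s_{ji}=1)$ is exactly the pair $x_{ij}\le s_{ij}$, $x_{ij}\le s_{ji}$, while $(s_{ij}=1 \wedge s_{ji}=1)\Rightarrow x_{ij}=1$ is exactly $s_{ij}+s_{ji}-1\le x_{ij}$. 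Jointly these force $x_{ij}=\min(s_{ij},s_{ji})$, which is automatically symmetric in $i$ and $j$, so treating $x_{ij}=x_{ji}$ and integrality as structural is consistent; your verification of both directions pairwise over all $ij$ is complete.

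The subtlety you flagged is genuine, and your resolution is the right one. As displayed in the paper, clause~2 of the link-bias stability definition reads ``if $x_{ij}=0$, then if $c_{ij}\ge 0$ then $c_{ji}\le 0$,'' which is \emph{not} literally equivalent to $s_{ij}+s_{ji}-1\le x_{ij}$: taken at face value it is vacuous when $c_{ij}<0$, so it would certify as stable a non-link from which both players strictly benefit (which the constraint forbids), and it would reject a non-link with $c_{ij}\ge 0$ and $c_{ji}>0$ (which the constraint permits, since then $s_{ij}=s_{ji}=0$). The reading you adopt via the link-parsimony convention stated in the text --- $x_{ij}=0$ must preclude $s_{ij}=s_{ji}=1$, with the boundary case $c_{ij}=0$ mapped to $s_{ij}=0$ by Expression~\ref{defn: psi} --- is evidently the intended clause (the clause as printed appears to be a typo) and is the only reading under which the lemma holds as stated; making that convention explicit, as you do, is exactly what a careful write-up of this proof requires.
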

\begin{proof}
See \cite{LGF11}.
\end{proof}
\begin{remark}
Note the linearity of the constraints in both $s_{ij}$ and $x_{ij}$ suggests we could introduce additional constraints to the constraints in Expression \ref{eqn: stability constraints0} and solve for both $\mathbf{x}$ and $\mathbf{s}$. If there is a feasible solution $(\mathbf{x},\mathbf{s})$ to the constraints, then this solution can be used to generate a cost matrix $\mathbf{c}$. In fact, it can be used to generate an infinite number of cost matrices, the simplest one given by:
\begin{equation}
c_{ij} = \begin{cases}
-1 & \text{if }s_{ij} = 1\\
1 & \text{if }s_{ij} = 0
\end{cases}
\label{defn:C}
\end{equation}
\end{remark}

\begin{proposition}
\label{prop: stabilityK}
Let $\mathbf{d}=(d_{1},d_{2},\ldots, d_{n})$ be a degree sequence and consider the following constraints:
\begin{equation}
\begin{alignedat}{2}
\sum_{j \neq i} x_{ij}&=d_{i} &\quad& \forall \; i  \\
x_{ij}&=x_{ji} && \forall \; ij\\
s_{ij}+s_{ji}-1 &\leq x_{ij} && \forall \; ij \\
x_{ij} &\leq s_{ij}  && \forall \; ij \\
x_{ij} &\leq s_{ji}  && \forall \; ij \\
s_{ij} &\in \{0,1\} && \forall \; ij
\end{alignedat}
\label{eqn: stability constraints1}
\end{equation}
If $(\mathbf{x},\mathbf{s})$ is a feasible solution to the constraints in Expression \ref{eqn: stability constraints1}, $\mathbf{c}$ is a cost matrix constructed from $\mathbf{s}$ as in Expression \ref{defn:C} and $f_1,\dots,f_N : X \rightarrow \mathbb{R}$ are player cost functions with $Y_i(v,\mathbf{x}) = -f_i(\mathbf{x}) = -\sum_{j}c_{ij}x_{ij}$, then $\mathbf{x} = \langle{x_{ij}}\rangle$ is a pairwise stable solution for $\mathcal{G}(Y,v,N)$ \textbf{and} has degree sequence $\mathbf{d}=(d_{1},d_{2},\ldots, d_{n})$, where $v$ is the balanced value function defined from $Y_i$ ($i=1,\dots,n$).
\hfill\qed
\end{proposition}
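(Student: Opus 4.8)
The plan is to recognize that the constraint system in Expression~\ref{eqn: stability constraints1} is nothing more than the stability system of Lemma~\ref{lem: stability} (Expression~\ref{eqn: stability constraints0}) augmented by the single block of degree equations $\sum_{j \neq i} x_{ij} = d_i$. Consequently a feasible pair $(\mathbf{x},\mathbf{s})$ automatically witnesses both assertions, and I would split the argument into the degree claim and the stability claim, each of which reads almost directly off the constraints.

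First I would dispatch the degree sequence. The constraint $\sum_{j \neq i} x_{ij} = d_i$ is, by Definition~\eqref{eqn:eta_i_def}, exactly $\eta_i(\mathbf{x}) = d_i$; letting $i$ range over $N$ gives $\eta(\mathbf{x}) = \mathbf{d}$, so $\mathbf{x}$ has the prescribed degree sequence. This requires nothing beyond reading off the first block of constraints.

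Next I would establish pairwise stability, and the key bookkeeping step is to verify that the cost matrix $\mathbf{c}$ built from $\mathbf{s}$ via Expression~\ref{defn:C} actually realizes $\mathbf{s}$ as its boolean preference mapping in the sense of Expression~\ref{defn: psi}. By construction $c_{ij} = -1 < 0$ precisely when $s_{ij} = 1$ and $c_{ij} = 1 \geq 0$ precisely when $s_{ij} = 0$, so the $\mathbf{s}$ appearing in the constraints is genuinely the preference profile induced by the linear cost functions $f_i(\mathbf{x}) = \sum_j c_{ij} x_{ij}$ with payoffs $Y_i(v,\mathbf{x}) = -f_i(\mathbf{x})$. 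Once $\mathbf{s}$ is matched to $\mathbf{c}$ in this way, the remaining rows of Expression~\ref{eqn: stability constraints1}, namely $x_{ij} = x_{ji}$ together with $s_{ij}+s_{ji}-1 \leq x_{ij}$, $x_{ij} \leq s_{ij}$, and $x_{ij} \leq s_{ji}$, coincide verbatim with the stability constraints of Expression~\ref{eqn: stability constraints0}. Invoking Lemma~\ref{lem: stability} then yields that $\mathbf{x}$ is pairwise stable for $\mathcal{G}(Y,v,N)$, with $v$ the balanced value function induced by the $Y_i$ as assumed in the hypothesis.

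The only point that warrants genuine care, and hence the main obstacle, is integrality of $\mathbf{x}$: Expression~\ref{eqn: stability constraints1} imposes $s_{ij} \in \{0,1\}$ but not $x_{ij} \in \{0,1\}$, whereas Lemma~\ref{lem: stability} requires the latter. I would therefore check integrality before applying the lemma. When $s_{ij}+s_{ji} \geq 1$ the three inequalities pin $x_{ij}$ to $0$ or $1$; the only slack case is $s_{ij} = s_{ji} = 0$, where the bounds give merely $-1 \leq x_{ij} \leq 0$, and here I would appeal to the standing convention that $\mathbf{x} \in X$ is a simple graph (equivalently, link parsimony) to force $x_{ij} = 0$. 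With integrality secured and $\mathbf{s}$ identified as the preference profile of $\mathbf{c}$, the two claims combine to complete the proof.
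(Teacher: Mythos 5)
Your proof is correct and takes essentially the approach the paper intends (the proposition is stated with a q.e.d.\ box and no written proof, precisely because it is immediate): the first block of constraints gives $\eta(\mathbf{x})=\mathbf{d}$ directly, and since the cost matrix of Expression \ref{defn:C} realizes $\mathbf{s}$ as its boolean mapping in the sense of Expression \ref{defn: psi}, the remaining constraints coincide with those of Lemma \ref{lem: stability}, yielding pairwise stability for $\mathcal{G}(Y,v,N)$. Your additional check that the inequalities pin $x_{ij}$ to $\{0,1\}$ (with the slack case $s_{ij}=s_{ji}=0$ settled by the standing convention $\mathbf{x}\in X$) is a careful touch the paper glosses over, but it does not alter the route.
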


\subsection{Construction of Cost Matrix via Optimization}
If there is no feasible $\mathbf{s}$ to the constraints in Expression \ref{eqn: stability constraints1}, the first constraint may be priced out and the nonlinear integer program (Expression \ref{model: optimization 1A}) may be solved to find a solution $(\mathbf{x},\mathbf{s})$ such that the resulting graph $\mathbf{x}$ is pairwise stable for $\mathcal{G}(Y,v,N)$ as defined in Proposition \ref{prop: stabilityK} and degree sequence of $\mathbf{x}$ is as close possible to $\mathbf{d}$ in the $\ell_1$ metric.

\begin{equation}
\label{model: optimization 1A}
\begin{aligned}
\min\;\; & \sum_{i} \left| \sum_{j \neq i} x_{ij}-d_{i} \right|\\
s.t.\; & x_{ij} - x_{ji} = 0 &\quad& \forall\; i < j\\
& s_{ij}+s_{ji}-1 \leq x_{ij} && \forall \; ij\\
&x_{ij} \leq s_{ij}  && \forall \; ij \\
&x_{ij} \leq s_{ji}  && \forall \; ij \\
& x_{ij},s_{ij} \in \{0,1\} && \forall\; i,j
\end{aligned}
\end{equation}
Problem \ref{model: optimization 1A} may be reformulated as Problem \ref{model: optimization 1B}, which has linear objective function and nonlinear constraints.
\begin{equation}
\label{model: optimization 1B}
\begin{aligned}
\min\;\; & \sum_{i} e_{i}\\
s.t.\; &  \left| \sum_{j \neq i} x_{ij}-d_{i} \right| \leq e_{i}\\
&x_{ij} - x_{ji} = 0 &\quad& \forall\; i < j\\
& s_{ij}+s_{ji}-1 \leq x_{ij} && \forall \; ij\\
&x_{ij} \leq s_{ij}  && \forall \; ij \\
&x_{ij} \leq s_{ji}  && \forall \; ij \\
& x_{ij},s_{ij} \in \{0,1\} && \forall\; i,j\\
\end{aligned}
\end{equation}
This problem can be reformulated as a purely linear integer programming problem (Problem \ref{model: optimization 1C}).
\begin{equation}
\label{model: optimization 1C}
\begin{aligned}
\min\;\;  &\sum_{i} e_{i} &\\
s.t.\;   &\sum_{j \neq i} x_{ij}-d_{i}  \leq e_{i} &\quad& \forall\; i\\
 - &\sum_{j \neq i} x_{ij}+d_{i}  \leq e_{i} && \forall\; i\\
 & s_{ij}+s_{ji}-1 \leq x_{ij} && \forall \; ij\\
 &x_{ij} \leq s_{ij}  && \forall \; ij \\
&x_{ij} \leq s_{ji}  && \forall \; ij \\
&x_{ij} - x_{ji}= 0 && \forall \; i < j\\
& x_{ij},s_{ij} \in \{0,1\} && \forall \; i,j\\
\end{aligned}
\end{equation}

\begin{remark} The following theorem is an immediate result of the equivalence of Problem \ref{model: optimization 1C} and Problem \ref{model: optimization 1A} and Proposition \ref{prop: stabilityK}.
\end{remark}

\begin{theorem}

Suppose $\mathbf{d}=(d_{1},d_{2},\ldots, d_{n})$ is a degree sequence, $(\mathbf{x},\mathbf{s})$ is an optimal solution to Problem \ref{model: optimization 1C}, $\mathbf{c}$ is a cost matrix constructed from $\mathbf{s}$ as in Expression \ref{defn:C} and $f_1,\dots,f_N : X \rightarrow \mathbb{R}$ are player cost functions with $Y_i(v,\mathbf{x}) = -f_i(\mathbf{x}) = -\sum_{j}c_{ij}x_{ij}$, then $\mathbf{x} = \langle{x_{ij}}\rangle$ is a pairwise stable solution for $\mathcal{G}(Y,v,N)$ \textbf{and} has degree sequence as close as possible to $\mathbf{d}$ in the $\ell_1$ metric, where $v$ is the balanced value function defined from $Y_i$ ($i=1,\dots,n$).
\label{thm:GeneralArbitraryBiasDist}
\end{theorem}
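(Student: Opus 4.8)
The plan is to establish the two assertions---pairwise stability of $\mathbf{x}$ and $\ell_1$-closeness of its degree sequence to $\mathbf{d}$---separately, leaning on the reductions already assembled. As the preceding remark signals, nearly all the structural work is done in Proposition \ref{prop: stabilityK} and in the reformulation chain; what remains is to wire these together and to resolve one genuine subtlety about the scope of the minimization.

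First I would dispatch stability. Any optimal solution $(\mathbf{x},\mathbf{s})$ to Problem \ref{model: optimization 1C} is in particular feasible, and every constraint of Problem \ref{model: optimization 1C} apart from the two absolute-value inequalities is exactly a stability constraint from the hypothesis of Proposition \ref{prop: stabilityK} (Expression \ref{eqn: stability constraints1}), with only the degree-exactness equality $\sum_{j\neq i} x_{ij} = d_i$ omitted. Hence $(\mathbf{x},\mathbf{s})$ satisfies the constraints of Lemma \ref{lem:BiasStable}, and the matrix $\mathbf{c}$ built from $\mathbf{s}$ via Expression \ref{defn:C} has $c_{ij}<0$ precisely when $s_{ij}=1$. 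Feeding this into $Y_i(v,\mathbf{x}) = -\sum_j c_{ij} x_{ij}$ and invoking the stability portion of Proposition \ref{prop: stabilityK}---whose argument does not use the degree equality---yields that $\mathbf{x}$ is pairwise stable for $\mathcal{G}(Y,v,N)$.

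Next I would pin down the optimal objective value through the equivalence of Problems \ref{model: optimization 1A}, \ref{model: optimization 1B}, and \ref{model: optimization 1C}. For fixed $(\mathbf{x},\mathbf{s})$ the pair of inequalities $\sum_{j\neq i} x_{ij} - d_i \le e_i$ and $-\sum_{j\neq i} x_{ij} + d_i \le e_i$ together assert exactly $e_i \ge \lvert \sum_{j\neq i} x_{ij} - d_i \rvert$, so minimizing $\sum_i e_i$ forces $e_i = \lvert \sum_{j\neq i} x_{ij} - d_i \rvert$ at optimality. Since the projections of the feasible sets onto the $(\mathbf{x},\mathbf{s})$ coordinates coincide, an optimal $(\mathbf{x},\mathbf{s})$ for Problem \ref{model: optimization 1C} is optimal for Problem \ref{model: optimization 1A}, whose objective is literally $\|\eta(\mathbf{x}) - \mathbf{d}\|_1$.

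Finally I would argue that this optimum coincides with the global one over all of $X$, matching Expression \ref{eqn: defn closest}, and this is the step I expect to require the most care. Problem \ref{model: optimization 1A} minimizes only over graphs that admit a feasible bias matrix $\mathbf{s}$, whereas the definition of closest ranges over every $\mathbf{x}' \in X$. I would close the gap by showing the stability constraints impose no restriction on the attainable graph: for any $\mathbf{x}' \in X$, the choice $s_{ij} = s_{ji} = x'_{ij}$ satisfies $x'_{ij} \le s_{ij}$, $x'_{ij} \le s_{ji}$, and $s_{ij} + s_{ji} - 1 \le x'_{ij}$ for every pair, as one checks in the cases $x'_{ij}=1$ and $x'_{ij}=0$. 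Thus every $\mathbf{x}' \in X$ is feasible for Problem \ref{model: optimization 1A}, so its minimum equals $\min_{\mathbf{x}' \in X} \|\eta(\mathbf{x}') - \mathbf{d}\|_1$; the optimal $\mathbf{x}$ is therefore closest to $\mathbf{d}$ in the sense of Expression \ref{eqn: defn closest}, which completes the proof.
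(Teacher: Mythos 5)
Your proof is correct and follows precisely the route the paper indicates: the paper itself defers the details to \cite{LGF11}, but its preceding remark states that the theorem is immediate from the equivalence of Problem \ref{model: optimization 1C} with Problem \ref{model: optimization 1A} together with Proposition \ref{prop: stabilityK} (stability resting on Lemma \ref{lem:BiasStable}, which does not involve the degree equality), and that is exactly your decomposition. Your final step---observing that any $\mathbf{x}' \in X$ is feasible for Problem \ref{model: optimization 1A} via the choice $s_{ij} = s_{ji} = x'_{ij}$, so the constrained minimum coincides with the unconstrained minimum in Expression \ref{eqn: defn closest}---correctly supplies the one detail the paper's remark leaves implicit.
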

\begin{proof}
See \cite{LGF11}.
\end{proof}

\begin{remark} Theorem \ref{thm:GeneralArbitraryBiasDist} shows how to construct a link biased game that encapsulates a specific degree sequence $\mathbf{d}$. The resulting game (or class of games) hides the degree distribution in the objective functions of the players in a non-obvious way, illustrating how arbitrary degree distributions can be hidden within the objective functions of individual players. This also suggests the need to investigate general value function inference from data, which we discuss in our future work.
\end{remark}

\section{Price of Anarchy}
When networks form as a result of selfish competition among nodes, the resulting stable network may not, in fact, be system optimal. It is possible that a stable configuration is achieved in which each node does worse than if a central planner had optimized the system. In this circumstance, we would like to measure the collective penalization due to decentralized control.  This collective price has been called the \textit{price of anarchy}, which is measured as the ratio between the worst equilibrium and the centralized solution.

The price of anarchy \cite{KP99} is a method to measure the inefficiency of equilibrium, but the development of the analyisis of the inefficiency of equilibriums predates the price of anarchy \cite{dubey1986}.  There already exists multiple variations of the price of anarchy (e.g. Pure Price of Anarchy, Mixed Price of Anarchy), for the different types of equilibriums that exist.  In this paper, we have defined pairwise stability rather than using the typical Nash Equilibrium, so this foreshadows the presentation of an analogue to the price of anarchy in the sequel.

The price of anarchy has been used to measure the inefficiency in congestion networks \cite{Ros73, GK05}.  In these games, each user of the network has a source and destination and they must pay a cost to travel from their source to their destination.  The latency or cost of each link in the network increases with congestion, that is as the number of players that use it increases.  Economically, we may think of \emph{capacity} on the network (specifically between an origin destination pair) as a commodity being supplied.  As more users want that commodity, the price increases.  Each user will selfishly minimize his own cost via route selection.  The inefficiency of the system is calculated by measuring the total cost of the system when users act selfishly versus the total cost when users act in a coordinated manner by a centralized solution.

The price of anarchy is calculated as:
\begin{equation}
\label{eqn: POA1}
\rho=\frac{\max_{\mathbf{x} \in X}{\sum_{i}{Y_{i}(\mathbf{x})}}}{\min_{\mathbf{x} \in E} \sum_{i}{Y_{i}(\mathbf{x})}}=\frac{\max_{\mathbf{x} \in E}{\sum_{i}{f_{i}(\mathbf{x})}}}{\min_{ \mathbf{x} \in X} \sum_{i}{f_{i}(\mathbf{x})}}
\end{equation}
where the set $E \subset X$ is defined as the subset of equilibrium solutions.  In this system, this ratio is a useful measurement because $\min_{\mathbf{x} \in X} \sum_{i}{f_{i}(\mathbf{x})}>0$.  In models such as the traffic congestion model, there is an inherent minimum cost to traverse a link due to the fact that each link has a certain minimal traversal time.  The congestion from other players and lack of capacity causes an additional cost, which is measured as a ratio of the physical cost.  However, this property is simply not true of all games where players act selfishly.  In models of network formation, there is not necessarily an inherent cost to the system.  It is possible that due to the desires and constraints of the players involved, there is some cost that must be absorbed by some set of players, but this is due to the inconsistency of the players' desires, resources, and constraints.  In the case where there is no such inconsistency and it is possible for all players to satisfy their desires, there is an optimal cost of zero.  In these circumstances, for these applications, it does not make sense to examine the ratio as the denominator may be zero.  Hence, we calculate the \emph{price of anarchy} as:
\begin{equation}
\label{eqn: POA2}
\rho=\max_{\mathbf{x} \in X}{\sum_{i}{Y_{i}(\mathbf{x})}}-\min_{\mathbf{x} \in E} \sum_{i}{Y_{i}(\mathbf{x})}=\max_{\mathbf{x} \in E}{\sum_{i}{f_{i}(\mathbf{x})}}-\min_{ \mathbf{x} \in X} \sum_{i}{f_{i}(\mathbf{x})}
\end{equation}
In this measurement, we measure the total additional cost of the worst selfish equilibrium over the best coordinated solution and denote this as the price of anarchy.

The price of anarchy is a measure of the collective unhappiness of players due to selfishness.  It is calculated as a function of the objective function of each player. Unless otherwise stated, we will return to the assumptions made in Section 4 that $Y_{i}(\mathbf{x})=-f_{i}(\eta_{i}(\mathbf{x}))=-f(\eta_{i}(\mathbf{x})-d_{i})$ where $f$ is convex with a minimum at $0$. The shape of the function $f$ will have a considerable influence on which graph is the \emph{worst} stable.  An infinitely steep function $f$ could have an infinitely large objective value for the worst equilibrium.

For computational ease and to obtain closed form theoretical results, we will consider $f_{i}(\eta_{i}(\mathbf{x}))=f(\eta_{i}(\mathbf{x})-d_{i})=|\eta_{i}(\mathbf{x})-d_{i}|$.  This function provides additional insight into the game because, for the best and worst graph, it measures the $\ell_1$ distance between the degree sequence of the graph generated through selfish decision making and a graph with the degree sequence closest in the $\ell_1$ metric to the target degree sequence $\mathbf{d}$.

\subsection{The Worst Stable Graph}
\subsubsection{Link Bias Game}
\begin{proposition} Assume $\mathbf{c}$ is a cost matrix in the link bias game $\mathcal{G}(Y,v,N)$ where $Y_i = -\sum_{j}c_{ij}x_{ij}$ and $v$ is the balanced value function induced by $Y_i$ ($i = 1,\dots,n)$. Let $s_{ij}$ be defined by Expression \ref{defn: psi}. If $\mathbf{x}$ is a solution to the integer programming problem:
\begin{equation}
\begin{aligned}
\max\;\; & \sum_{i}\sum_{j}c_{ij}x_{ij}\\
s.t. \;\; & x_{ij}=x_{ji} &\quad& \forall \; ij\\
& s_{ij}+s_{ji}-1 \leq x_{ij} && \forall \; ij \\
& x_{ij} \leq s_{ij}  && \forall \; ij \\
& x_{ij} \leq s_{ji}  && \forall \; ij \\
& x_{ij} \in \{0,1\} && \forall \; ij
\end{aligned}
\label{eqn:LinkBiasWorstGraph}
\end{equation}
then $\mathbf{x}$ is a stable graph with the minimum net payoff.
\end{proposition}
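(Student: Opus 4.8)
The plan is to decouple the claim into two independent observations and then simply combine them. First, I would show that the feasible region of the integer program in Expression~\ref{eqn:LinkBiasWorstGraph} is exactly the set of pairwise stable graphs for $\mathcal{G}(Y,v,N)$. Second, I would show that the linear objective $\sum_i\sum_j c_{ij}x_{ij}$ is precisely the negative of the net payoff $\sum_i Y_i(\mathbf{x})$. Once both are established, maximizing the objective over the feasible region is literally identical to minimizing $\sum_i Y_i(\mathbf{x})$ over the stable graphs, which is exactly the worst stable graph we seek.

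For the first observation, I would fix the preference matrix $\mathbf{s}$ to be the boolean image of $\mathbf{c}$ prescribed by Expression~\ref{defn: psi}, so that each $s_{ij}$ is a \emph{constant} rather than a decision variable; note that the program only quantifies $x_{ij}\in\{0,1\}$, so this substitution is legitimate. With these constants in place, the constraints of Expression~\ref{eqn:LinkBiasWorstGraph} are term-for-term the stability constraints of Lemma~\ref{lem:BiasStable}. I would then invoke that lemma directly: a graph $\mathbf{x}$ satisfying $x_{ij}=x_{ji}$, $s_{ij}+s_{ji}-1\le x_{ij}$, $x_{ij}\le s_{ij}$, and $x_{ij}\le s_{ji}$ for all $ij$ is pairwise stable under $\mathbf{c}$ and conversely, so the feasible set coincides with the set of stable graphs. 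As a sanity check, combining $x_{ij}\le\min(s_{ij},s_{ji})$ with $x_{ij}\ge s_{ij}+s_{ji}-1$ forces $x_{ij}=s_{ij}s_{ji}$, which confirms the feasible set is nonempty and hence the maximum is attained.

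For the second observation, I would expand the net payoff using the hypothesis $Y_i(\mathbf{x})=-\sum_j c_{ij}x_{ij}$, giving $\sum_i Y_i(\mathbf{x})=-\sum_i\sum_j c_{ij}x_{ij}$, and therefore $\sum_i\sum_j c_{ij}x_{ij}=-\sum_i Y_i(\mathbf{x})$. Maximizing the left-hand side over the feasible region is thus the same problem as minimizing $\sum_i Y_i(\mathbf{x})$ over that region, and by the first observation the region is exactly the stable graphs. Any optimal $\mathbf{x}$ is therefore a stable graph achieving the minimum net payoff, which is the conclusion.

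The genuinely routine parts are the algebraic identity relating the objective to $\sum_i Y_i$ and the mechanical substitution of the constants $s_{ij}$ into the lemma. The only point demanding any care is the bookkeeping tying the linear constraint system back to the case-based definition of stability, specifically confirming that $s_{ij}+s_{ji}-1\le x_{ij}$ encodes the link-parsimony requirement that a mutually beneficial pair must be linked, while $x_{ij}\le s_{ij}$ and $x_{ij}\le s_{ji}$ encode that an existing link benefits both endpoints. Since this is exactly the content of Lemma~\ref{lem:BiasStable}, the argument ultimately reduces to citing that lemma correctly rather than re-deriving the equivalence, so I expect no real obstacle.
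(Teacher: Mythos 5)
Your proposal is correct and takes essentially the same route as the paper, whose entire proof is the one-line remark that the claim ``is clear from the objective function and Lemma~\ref{lem:BiasStable}'': your two observations (the feasible region coincides with the stable graphs once $s_{ij}$ is fixed as a constant via Expression~\ref{defn: psi}, and the objective equals $-\sum_i Y_i(\mathbf{x})$ so maximizing it minimizes net payoff) merely spell out that remark. Your added sanity check that the constraints force $x_{ij}=s_{ij}s_{ji}$, so the feasible set is nonempty and the maximum is attained, is a correct bonus the paper leaves implicit.
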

\begin{proof} This is clear from the objective function and Lemma \ref{lem:BiasStable}.
\end{proof}
\subsubsection{Degree Sequence Game}
Next, we consider the model where Player $i$ has an allocation function $Y_{i}(v,\mathbf{x})=-f_{i}(\eta_{i}(\mathbf{x}))$ where $f_{i}:\mathbb{R} \rightarrow \mathbb{R}$ is convex with a minimum at $d_{i}$.  In this section, we define an integer program to find the stable graph with the worst total allocation for the players involved in the game.  The feasible region of this integer program will be the set of stable graphs.

\begin{remark} Define:
\begin{equation}
r_{ij}(\mathbf{x}) = \begin{cases}
1 & \text{if $Y_{i}(v,\mathbf{x}) \geq Y_{i}(v,\mathbf{x} - ij)$}\\
0 & \text{else}
\end{cases}
\end{equation}
and
\begin{equation}
p_{ij}(\mathbf{x}) = \begin{cases}
1 & \text{if $Y_{i}(v,\mathbf{x}+ij) > Y_{i}(v,\mathbf{x})$}\\
0 & \text{else}
\end{cases}
\end{equation}
and
\begin{equation}
q_{ij}(\mathbf{x}) = \begin{cases}
1 & \text{if $Y_{j}(v,\mathbf{x}+ij) < Y_{j}(v,\mathbf{x})$}\\
0 & \text{else}
\end{cases}
\end{equation}
Since stability is simply a propositional statement with propositions $r_{ij}(\mathbf{x})$, $p_{ij}(\mathbf{x})$ and $q_{ij}(\mathbf{x})$, it can be shown  that the following nonlinear integer programming problem will produce the stable graph with the minimal net payoff \cite{GTR11}:
\begin{equation}
\begin{aligned}
\min\;\; & \sum_{i} Y_i(v,\mathbf{x})\\
&x_{ij} = x_{ji} &\quad& \forall\; i < j\\
&(1-x_{ij}) + r_{ij}(\mathbf{x}) \geq 1 && \forall\; i,j \\
&x_{ij} + (1-p_{ij}(\mathbf{x})) + q_{ij}(\mathbf{x}) \geq 1 && \forall\; i,j\\
&x_{ij} \in \{0,1\} && \forall\; i,j
\end{aligned}
\end{equation}
However, the integer programming problem in its raw form is highly non-linear and therefore not efficient for computing the worst stable graph that can result from an arbitrary $Y_i(v,\mathbf{x})$. \end{remark}

\begin{lemma}
A graph $\mathbf{x} = \langle{x_{ij}}\rangle$ with value function $v$ and allocation rule $Y_{i}(v,\mathbf{x})=-f_{i}(\eta_{i}(\mathbf{x}))$ where $f_{i}$ is convex and has a minimum at $d_{i}$ is pairwise \textit{stable} if and only if:
\begin{enumerate*}
\item for all $i$, $\sum_{j\neq i} x_{ij} \leq d_i$
\item for all $i,j\neq i$, if $x_{ij} = 1$, then $\sum_{l \neq i}x_{il} \leq d_{i}$ and $\sum_{l \neq j}x_{lj} \leq d_{j}$
\item for all $i,j\neq i$, $\sum_{l \neq i}x_{il} < d_{i}$ and $\sum_{l \neq j}x_{lj} < d_{j} \implies x_{ij} = 1$
\end{enumerate*}
\label{lem:Setup}
\end{lemma}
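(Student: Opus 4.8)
The plan is to reduce pairwise stability to a collection of comparisons of the single-variable cost $f_i$ at consecutive integers, and then to read off conditions (1)--(3) from the signs of these comparisons using convexity. The crucial structural observation is that $Y_i(v,\mathbf{x}) = -f_i(\eta_i(\mathbf{x}))$ depends on $\mathbf{x}$ only through the degree $\eta_i = \sum_{l\neq i} x_{il}$, and that forming or severing the single link $ij$ changes only $\eta_i$ and $\eta_j$, each by exactly $1$, while leaving every other degree fixed. Hence $Y_i(v,\mathbf{x}) - Y_i(v,\mathbf{x}-ij) = f_i(\eta_i - 1) - f_i(\eta_i)$ and $Y_i(v,\mathbf{x}+ij) - Y_i(v,\mathbf{x}) = f_i(\eta_i) - f_i(\eta_i + 1)$, so the whole stability definition is governed by the forward differences of $f_i$ on the integers. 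First I would record the elementary consequence of convexity with minimum at the integer $d_i$: on the integers $f_i$ is decreasing up to $d_i$ and increasing thereafter, giving the three equivalences $f_i(\eta_i) \leq f_i(\eta_i-1) \iff \eta_i \leq d_i$, then $f_i(\eta_i+1) < f_i(\eta_i) \iff \eta_i < d_i$, and $f_i(\eta_i+1) > f_i(\eta_i) \iff \eta_i \geq d_i$.

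For the forward implication, assume $\mathbf{x}$ is pairwise stable. Applying the first clause of the definition to each existing link $ij$ (at both endpoints, since the clause ranges over all ordered pairs) gives $Y_i(v,\mathbf{x}) \geq Y_i(v,\mathbf{x}-ij)$ and $Y_j(v,\mathbf{x}) \geq Y_j(v,\mathbf{x}-ij)$; translating through the first equivalence yields $\eta_i \leq d_i$ and $\eta_j \leq d_j$, which is precisely condition (2). Condition (1) follows at once: a node incident to at least one link satisfies $\eta_i \leq d_i$ by (2), while a node with no links has $\eta_i = 0 \leq d_i$. For condition (3), suppose toward a contradiction that $x_{ij}=0$ while $\eta_i < d_i$ and $\eta_j < d_j$. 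By the second and third equivalences both players would strictly gain from $ij$, i.e. $Y_i(v,\mathbf{x}+ij) > Y_i(v,\mathbf{x})$ and $Y_j(v,\mathbf{x}+ij) > Y_j(v,\mathbf{x})$, so the hypothesis of the second stability clause holds for $i$ while its conclusion fails for $j$, contradicting stability. Hence $\eta_i < d_i$ and $\eta_j < d_j$ force $x_{ij}=1$, which is (3).

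For the converse, assume (1)--(3) and verify the two clauses. For any link with $x_{ij}=1$, condition (2) gives $\eta_i \leq d_i$, so the first equivalence yields $Y_i(v,\mathbf{x}) \geq Y_i(v,\mathbf{x}-ij)$, and symmetrically for $j$; this is the first clause. For any non-link with $x_{ij}=0$, suppose $Y_i(v,\mathbf{x}+ij) > Y_i(v,\mathbf{x})$; the second equivalence gives $\eta_i < d_i$, and if we also had $\eta_j < d_j$ then (3) would force $x_{ij}=1$, a contradiction, so $\eta_j \geq d_j$ and the third equivalence gives $Y_j(v,\mathbf{x}+ij) < Y_j(v,\mathbf{x})$, the required conclusion of the second clause. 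I expect the main obstacle to be the boundary behaviour underlying the three equivalences: the $(\Rightarrow)$ directions of the strict comparisons need $f_i$ to be \emph{strictly} decreasing before $d_i$ and strictly increasing after, and for a merely convex $f_i$ with a flat stretch through $d_i$ a node could sit at $\eta_i > d_i$ without wishing to drop a link, breaking condition (2). I would therefore either sharpen the hypothesis to strict convexity (equivalently, strictly monotone forward differences away from $d_i$), or specialize to the cost $f_i(\eta_i)=|\eta_i - d_i|$ adopted later in the paper, whose forward differences are exactly $\pm 1$ and for which all three equivalences hold without qualification.
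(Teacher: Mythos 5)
Your proposal is correct and follows essentially the same route as the paper: both directions are argued by translating each clause of pairwise stability into monotonicity statements about $f_i$ on either side of $d_i$, with condition (3) handled against the second stability clause via contraposition. The paper's own proof is simply a less formal version of yours --- it asserts directly that a player with $\sum_{j\neq i}x_{ij} > d_i$ ``could unilaterally drop one link and obtain a more favorable payoff,'' and in the converse that severing a link at degree $\leq d_i$ ``would yield a lower payoff,'' without ever isolating the forward-difference equivalences you state. The one substantive point where you go beyond the paper is your closing caveat, and it is well taken: the paper tacitly uses \emph{strict} monotonicity of $f_i$ away from $d_i$, which mere convexity with a (possibly non-unique) minimum at $d_i$ does not supply. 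Indeed, for $f_i \equiv 0$ every graph is pairwise stable while conditions (1) and (2) can fail, and on the converse side a flat stretch past $d_j$ breaks the required strict inequality $Y_j(v,\mathbf{x}+ij) < Y_j(v,\mathbf{x})$ in the second stability clause. The paper never addresses this; its statement is only correct under a reading of ``minimum at $d_i$'' as unique minimizer (equivalently, strict decrease before and strict increase after $d_i$, which unique minimality plus convexity does imply), or under the specialization $f_i(\eta_i) = |\eta_i - d_i|$ adopted later for the price-of-anarchy computations. Either of the repairs you propose is exactly the right fix, so your write-up is, if anything, more rigorous than the original.
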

\begin{proof} Suppose $\mathbf{x}$ is pairwise stable. If for any Player $i$, $\sum_{j\neq i} x_{ij} > d_i$, then Player $i$ could unilaterally drop one link and obtain a more favorable payoff. Thus, it is clear $\sum_{j\neq i} x_{ij} \leq d_i$ since we assumed Condition 1 of pairwise stability that  for all $i, j$, if $x_{ij} = 1$, then $Y_{i}(v,\mathbf{x}) \geq Y_{i}(v,\mathbf{x} - ij)$. Thus Constraint 1 must be true. Constraint 2 follows from this argument as well. If we assume for all $i,j$ if $x_{ij} = 0$ then, if $Y_{i}(v,\mathbf{x}+ij) > Y_{i}(v,\mathbf{x})$, then $Y_{j}(v,\mathbf{x}+ij) < Y_{j}(v,\mathbf{x})$ then this is equivalent to assuming if $x_{ij} = 0$ then if $\sum_{k\neq i}x_{ik} < d_i$ then $\sum_{k \neq j} x_{jk} = d_j$. If we take the contrapositive of Constraint 3, then we obtain: if $x_{ij} = 0$ then
$\sum_{i \neq j} x_{ik} = d_i$ or $\sum_{k \neq j} x_{jk} = d_j$. Using a simple logical equivalence, we may rewrite this expression as: if $x_{ij} = 0$ then if $\sum_{i \neq j} x_{ik} \neq d_i$ then $\sum_{k \neq j} x_{jk} = d_j$. Since we've proved Constraint 1 must hold, $\sum_{i \neq j} x_{ik} \neq d_i$ is equivalent to $\sum_{i \neq j} x_{ik} < d_i$. Thus, Constraint 3 follows from Condition 2 of pairwise stability.

Conversely, suppose these three conditions hold. The logical equivalence between Constraints 1 and Constraint 3 and Condition 2 of pairwise stability has already been established in the forgoing argument. By Constraint 2, we know that if $x_{ij} = 1$, then each Player $i$ has degree at most $d_i$, the value that maximizes his payoff function. Thus, setting $x_{ij} = 0$ (effectively constructing $\mathbf{x} - ij$ would yield a lower payoff for both Player $i$ and Player $j$. Thus, Condition 1 of pairwise stability is established.
\end{proof}

\begin{defn}
Let $\mathbf{u}$ be a vector of slack variables on the degrees of the nodes in the game. Then:
\begin{equation*}
u_{i} = \begin{cases}
d_{i}-\sum_{l \neq i}x_{il} & \text{if } \sum_{l \neq i}x_{il} < d_{i}\\
0 & \text{else } (i.e. \sum_{l \neq i}x_{il}=d_{i})
\end{cases}
\end{equation*}
Similarly, let $\mathbf{z}$ to be the binary mapping of the vector $\mathbf{u}$ with:
\begin{equation*}
z_{i} = \begin{cases}
1 & \text{if } u_{i}>0 \text{  }(i.e. \sum_{l \neq i}x_{il} < d_{i})\\
0 & \text{if } u_{i}=0 \text{  }(i.e. \sum_{l \neq i}x_{il}=d_{i})
\end{cases}
\end{equation*}
\label{defn:UV}
\end{defn}

\begin{remark} Note we will not consider the case when $\sum_{l \neq i}x_{il} > d_{i}$ because each player has the unilateral power to veto any connection.
\end{remark}

\begin{lemma}
\label{lem: stability1} Let $Y_i(v,\mathbf{x})$ be as in the statement of Lemma \ref{lem:Setup}. Let $\mathbf{d}$ be a degree sequence and let $\mathbf{x} = \langle{x_{ij}}\rangle$ with vectors $\mathbf{u}$,$\mathbf{z}$ derived from Definition \ref{defn:UV}. Then $\mathbf{x}$ is stable if and only if the following constraints are satisfied:
\begin{alignat}{2}
\sum_{j \neq i}x_{ij}+u_{i}&=d_{i} &\;\;&\forall\; i  \label{eq: constraint 1}\\
z_{i}+z_{j}-1 &\leq x_{ij} && \forall\; i, j \neq i \label{eq: constraint 2}\\
z_{i}&\leq u_{i}&& \forall\; i \label{eq: constraint 3}\\
u_{i} &\leq d_{i} z_{i}&& \forall\; i  \label{eq: constraint 4}\\
x_{ij} &= x_{ji} && \forall\; i,j \neq i\\
u_{i}& \geq 0&& \forall\; i  \label{eq: constraint 5}\\
z_{i} &\in \{0,1\} && \forall\; i \label{eq: constraint 6}\\
x_{ij} & \in \{0,1\} && \forall\; i, j\neq i
\end{alignat}
\label{lem:StabilityIP}
\end{lemma}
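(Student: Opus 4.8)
The plan is to prove the lemma by showing that the system \eqref{eq: constraint 1}--\eqref{eq: constraint 6}, together with symmetry and integrality of $\mathbf{x}$, is an exact linearization of the three combinatorial conditions characterizing pairwise stability in Lemma \ref{lem:Setup}. The first thing I would record is that Condition 2 of Lemma \ref{lem:Setup} is redundant: since Condition 1 asserts $\sum_{j\neq i} x_{ij} \le d_i$ for \emph{every} node $i$, the degree bounds that Condition 2 demands whenever $x_{ij}=1$ hold automatically. Hence it suffices to encode Conditions 1 and 3, and the whole argument reduces to checking that the constraint system is equivalent to those two.

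For the forward direction I would assume $\mathbf{x}$ is pairwise stable and take $\mathbf{u},\mathbf{z}$ exactly as in Definition \ref{defn:UV}. Constraint \eqref{eq: constraint 1} together with nonnegativity \eqref{eq: constraint 5} is then just the defining relation $u_i = d_i - \sum_{j\neq i} x_{ij}$ rewritten, and its feasibility is precisely Condition 1. The pair \eqref{eq: constraint 3}--\eqref{eq: constraint 4} I would verify by cases on whether $u_i>0$: here I must use that $u_i$ is a difference of integers, so $u_i>0$ forces $u_i \ge 1 \ge z_i$ and $u_i \le d_i = d_i z_i$, while $u_i = 0$ forces $z_i \le u_i = 0$; this confirms that $z_i$ is the correct $0$--$1$ indicator of $\sum_{l\neq i}x_{il} < d_i$. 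Finally, Constraint \eqref{eq: constraint 2} follows from Condition 3: whenever $z_i = z_j = 1$ its left side equals $1$ and Condition 3 guarantees $x_{ij}=1$, while otherwise the left side is at most $0 \le x_{ij}$. Symmetry and the binary constraints are inherited directly from the graph structure.

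For the converse I would assume the full system holds and recover the stability conditions. Constraints \eqref{eq: constraint 1} and \eqref{eq: constraint 5} immediately give $\sum_{j\neq i}x_{ij} = d_i - u_i \le d_i$, which is Condition 1 (and hence the redundant Condition 2). The crux is to show \eqref{eq: constraint 3}--\eqref{eq: constraint 4} force $z_i = 1 \iff u_i > 0$: if $u_i>0$ then \eqref{eq: constraint 4} rules out $z_i=0$, so $z_i=1$, and if $u_i = 0$ then \eqref{eq: constraint 3} forces $z_i = 0$. This identifies $z_i$ with the indicator of $\sum_{l\neq i}x_{il} < d_i$, matching Definition \ref{defn:UV}. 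Then, assuming $\sum_{l\neq i}x_{il}<d_i$ and $\sum_{l\neq j}x_{lj}<d_j$, we obtain $z_i=z_j=1$, and \eqref{eq: constraint 2} yields $x_{ij}\ge 1$, i.e.\ Condition 3. With Conditions 1, 2, and 3 in hand, Lemma \ref{lem:Setup} declares $\mathbf{x}$ stable.

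The step I expect to be the main obstacle is the correctness of the big-$M$-style pair \eqref{eq: constraint 3}--\eqref{eq: constraint 4} as an encoding of the strict inequality $u_i > 0$. Getting this right depends essentially on the integrality of $\mathbf{x}$, so that $u_i$ is a nonnegative integer and the gap between $u_i = 0$ and $u_i \ge 1$ can be detected, and on the bound $u_i \le d_i$, which makes $d_i$ a valid coefficient so that $z_i = 1$ never over-constrains $u_i$. The Remark following Definition \ref{defn:UV} lets me discard the case $\sum_{l\neq i}x_{il} > d_i$, which would otherwise render Constraint \eqref{eq: constraint 1} infeasible; everything else is a routine unwinding of the logical equivalences already established in the proof of Lemma \ref{lem:Setup}.
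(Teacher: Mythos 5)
Your proposal is correct and takes essentially the same route as the paper's own proof: both directions are verified against the characterization in Lemma \ref{lem:Setup}, with the integrality of $\mathbf{x}$ (so that $u_i \in \mathbb{Z}_+$ and $u_i > 0$ forces $u_i \geq 1$) driving the indicator encoding in Constraints \eqref{eq: constraint 3}--\eqref{eq: constraint 4}, and Constraint \eqref{eq: constraint 2} encoding Condition 3. If anything, you are slightly more careful than the paper, which never explicitly verifies Constraint \eqref{eq: constraint 2} in the forward direction and does not record your (correct) observation that Condition 2 of Lemma \ref{lem:Setup} is already implied by Condition 1.
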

\begin{proof} Suppose that $\mathbf{x}$ is stable and $\mathbf{u}$ and $\mathbf{z}$ are defined appropriately. Clearly, $x_{ij} = x_{ji}$ holds. By Lemma \ref{lem:Setup} we must have $\sum_{l \neq j}x_{lj} \leq d_{j}$ and so $u_i \geq 0$ and Constraint \ref{eq: constraint 1} holds by definition. Further it is clear $0 \leq u_i \leq d_i$ for all $i$. By Definition \ref{defn:UV}, if $u_i = 0$, then $z_i = 0$ and thus $z_i \leq u_i$ and $u_i \leq d_iz_i$. The fact that $u_i \in \mathbb{Z}_+$ is ensured by the integrality of $\mathbf{x}$ so if $u_i > 0$ then $u_i \geq 1$ thus $u_i \geq z_i = 1$ and as we observed $u_i \leq d_i = d_iz_i$.

Conversely, suppose the constraints hold. That $\mathbf{x}$ is a graph is clear. Trivially, $\sum_{l \neq j}x_{lj} \leq d_{j}$. Suppose that $u_i, u_j > 0$. Then necessarily $z_i,z_j = 1$ since Constraint \ref{eq: constraint 4} must hold. Thus, by Constraint \ref{eq: constraint 2} $x_{ij} = 1$ and we have established the first and third constraint of Lemma \ref{lem:Setup}. Conversely, suppose that $x_{ij} = 1$. Then $z_i$ and $z_j$ may take any value to satisfy Constraint \ref{eq: constraint 2} and the second constraint of Lemma \ref{lem:Setup} must hold. This completes the proof.
\end{proof}

\begin{theorem} Let $Y_i(v,\mathbf{x})$ be as in the statement of Lemma \ref{lem:Setup}. Let $\mathbf{d}$ be a degree sequence. The solution to the following integer programming problem yields a graph $\mathbf{x}$ that is stable and has the worst net payoff function of any stable graph.
\begin{equation}
\label{model: WorstGraphIP}
\begin{alignedat}{3}
\max\;\; &\sum_{i} u_{i}\\
s.t.\;\;& \sum_{j \neq i}x_{ij}+u_{i}=d_{i} &\;\;&\forall\; i \\
&z_{i}+z_{j}-1 \leq x_{ij} && \forall\; i, j \neq i \\
&z_{i}\leq u_{i}&& \forall\; i \\
&u_{i} \leq d_{i} z_{i}&& \forall\; i  \\
&x_{ij} = x_{ji} && \forall\; i,j \neq i\\
&u_{i} \geq 0&& \forall\; i  \\
&z_{i} \in \{0,1\} && \forall\; i \\
&x_{ij}  \in \{0,1\} && \forall\; i, j\neq i
\end{alignedat}
\end{equation}
\end{theorem}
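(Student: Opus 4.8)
The plan is to prove the theorem in two stages: first identify the feasible region of Problem \ref{model: WorstGraphIP} with the set of pairwise stable graphs, and then show that on that region the linear objective $\sum_i u_i$ equals the negative of the net payoff, so that maximizing it selects the worst stable graph.

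For the first stage I would simply invoke Lemma \ref{lem:StabilityIP}: its constraints are exactly the constraints of Problem \ref{model: WorstGraphIP}, and the lemma asserts that a triple $(\mathbf{x},\mathbf{u},\mathbf{z})$ satisfies them if and only if $\mathbf{x}$ is stable and $\mathbf{u},\mathbf{z}$ are the slack and indicator vectors of Definition \ref{defn:UV}. Hence the projection of the feasible set onto the $\mathbf{x}$ coordinates is precisely the set of stable graphs, and, crucially, for each feasible point the equality Constraint \ref{eq: constraint 1} pins $u_i = d_i - \sum_{j\neq i} x_{ij} = d_i - \eta_i(\mathbf{x})$ uniquely. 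Therefore the objective value $\sum_i u_i$ is a well-defined function of the graph $\mathbf{x}$ alone (the $z_i$ are likewise forced by Constraints \ref{eq: constraint 3} and \ref{eq: constraint 4}, but they never enter the objective).

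For the second stage I would use the $\ell_1$ cost $f_i(\eta_i(\mathbf{x})) = |\eta_i(\mathbf{x}) - d_i|$ fixed earlier in this section. By Condition 1 of Lemma \ref{lem:Setup}, every stable graph satisfies $\eta_i(\mathbf{x}) = \sum_{j\neq i} x_{ij} \le d_i$ for all $i$, so the absolute value unfolds as $|\eta_i(\mathbf{x}) - d_i| = d_i - \eta_i(\mathbf{x}) = u_i$. Consequently $\sum_i Y_i(v,\mathbf{x}) = -\sum_i f_i(\eta_i(\mathbf{x})) = -\sum_i u_i$ on the entire feasible region. The worst stable graph is by definition the stable graph minimizing the net payoff $\sum_i Y_i(v,\mathbf{x})$, i.e.\ the one maximizing $\sum_i u_i$; since Problem \ref{model: WorstGraphIP} maximizes $\sum_i u_i$ over exactly the stable graphs, its optimal $\mathbf{x}$ is the desired worst stable graph.

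The main obstacle --- and the point I would state explicitly --- is that the reduction of the payoff objective to the linear surrogate $\sum_i u_i$ relies on two facts acting together: the stability-forced inequality $\eta_i(\mathbf{x}) \le d_i$, which lets the absolute value be rewritten as $d_i - \eta_i(\mathbf{x})$, and the specific $\ell_1$ cost $f_i = |\eta_i - d_i|$, which makes that quantity linear in the decision variables. For a general convex $f$ with minimum at $d_i$ the payoff is still monotone decreasing in each $\eta_i$ on the admissible range, but $\sum_i f_i(\eta_i(\mathbf{x}))$ need not be an affine function of $\sum_i u_i$, and two stable graphs with equal $\sum_i u_i$ can carry different net payoffs. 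Thus I would either restrict the statement to the $\ell_1$ cost (as the surrounding discussion does) or replace the objective by $\max \sum_i f_i(d_i - u_i)$, which preserves the same feasible region from Lemma \ref{lem:StabilityIP} while correctly tracking the net payoff.
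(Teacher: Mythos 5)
Your proposal is correct and takes essentially the same route as the paper's own one-line proof, which likewise derives the feasible region from Lemma \ref{lem:StabilityIP} and the objective from the assumptions on $Y_i(v,\mathbf{x})$ in Lemma \ref{lem:Setup}; you have simply made explicit the omitted computation that on the feasible region $\sum_i Y_i(v,\mathbf{x}) = -\sum_i u_i$. Your closing caveat is well taken and goes beyond the paper: as literally stated the theorem invokes a general convex $f_i$, yet the objective $\sum_i u_i$ tracks the net payoff only under the $\ell_1$ cost $f_i(\eta_i(\mathbf{x})) = |\eta_i(\mathbf{x}) - d_i|$ fixed earlier in the section (two stable graphs with equal total slack but different slack profiles, e.g.\ $(3,1)$ versus $(2,2)$, carry different payoffs under a strictly convex $f$), a specialization the paper's terse proof leaves implicit and that your restriction or your modified objective $\max \sum_i f_i(d_i - u_i)$ would repair.
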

\begin{proof}
The statement follows at once from Lemma \ref{lem:StabilityIP} and the assumptions on $Y_i(v,\mathbf{x})$ made in Lemma \ref{lem:Setup}.
\end{proof}

\begin{example} Consider a simple example with 10 players who each have convex cost functions with minima at $5$. That is, each player would prefer to link with no more and no less than $5$ other players. Thus, the ideal graph solution is one in which each player resides in a 5-regular graph. This setup yields an instantiation of the integer programming problem in Expression \ref{model: WorstGraphIP}:
\begin{displaymath}
\begin{alignedat}{3}
\max\;\; &\sum_{i} u_{i}\\
s.t.\;\;& \sum_{j \neq i}x_{ij}+u_{i}=5 &\;\;&\forall\; i \\
&z_{i}+z_{j}-1 \leq x_{ij} && \forall\; i, j \neq i \\
&z_{i}\leq u_{i}&& \forall\; i \\
&u_{i} \leq 5 z_{i}&& \forall\; i  \\
&x_{ij} = x_{ji} && \forall\; i,j \neq i\\
&u_{i} \geq 0&& \forall\; i  \\
&z_{i} \in \{0,1\} && \forall\; i \\
&x_{ij}  \in \{0,1\} && \forall\; i, j\neq i
\end{alignedat}
\end{displaymath}
with solution visualized in Figure \ref{fig:StableBadGraph}:
\begin{figure}[htbp]
\centering
\includegraphics[scale=0.3]{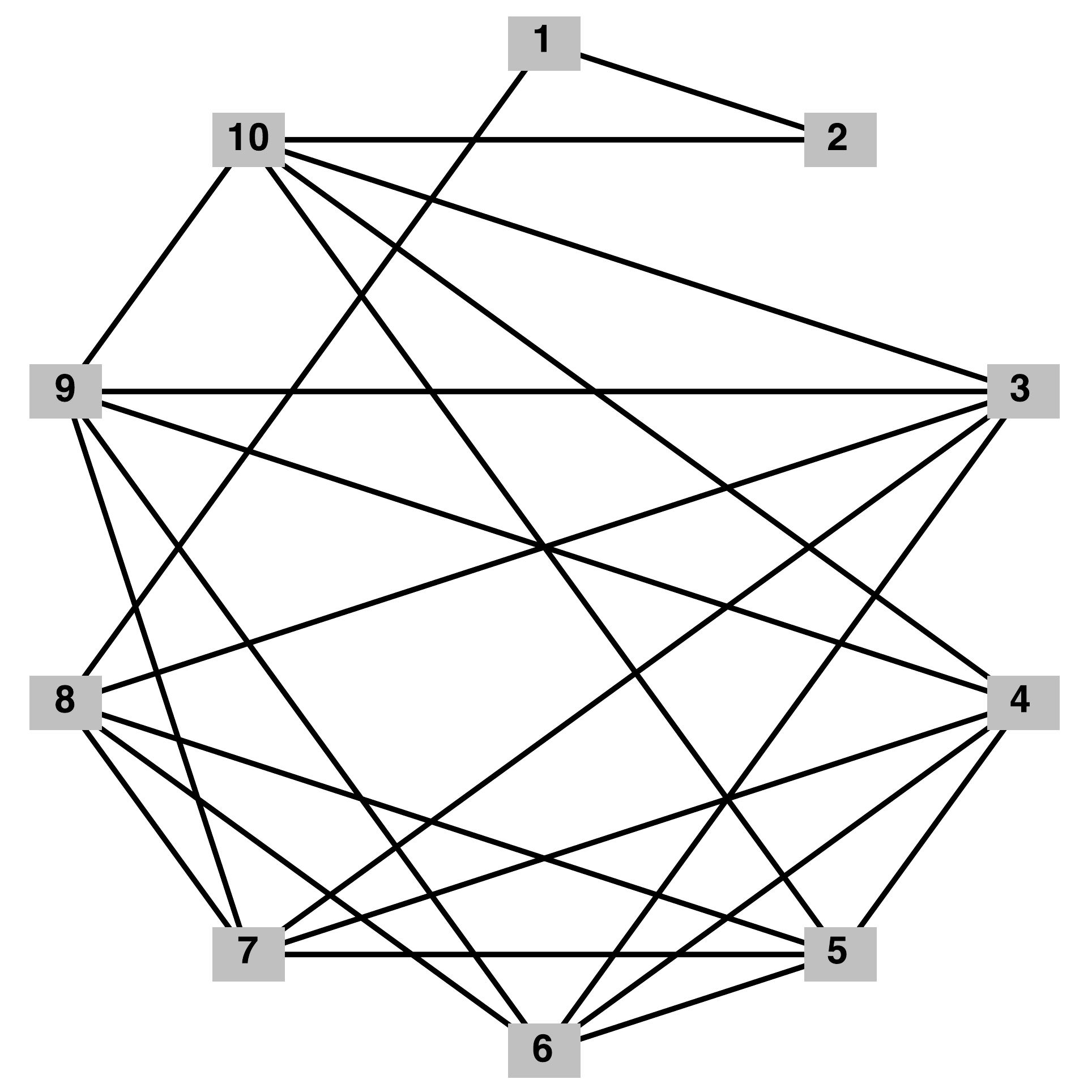}
\caption{The worst solution to the problem in which 10 players each desired to link with 5 players. The objective function in this case is 6.}
\label{fig:StableBadGraph}
\end{figure}
If we assume the objective function of each player is $f_i(\eta(\mathbf{x})) = |\eta_i(x) - 5|$, then the worst case net payoff is $6$. In this solution, we see Players 1 and 2 both have degree 2, each missing 3 connections.
\label{ex:Regular}
\end{example}

\subsection{The Best Graph}
\subsubsection{Link Bias Game}
\begin{proposition}
Assume $\mathbf{c}$ is a cost matrix in the link bias game $\mathcal{G}(Y,v,N)$ where $Y_i = -\sum_{j}c_{ij}x_{ij}$ and $v$ is the balanced value function induced by $Y_i$ ($i = 1,\dots,n)$. Let $s_{ij}$ be defined by Expression \ref{defn: psi}. If $\mathbf{x}$ is a solution to the integer programming problem:
\begin{equation}
\begin{aligned}
\min\;\; & \sum_{i}\sum_{j}c_{ij}x_{ij}\\
s.t. \;\; & x_{ij}=x_{ji} &\quad& \forall \; ij\\
& x_{ij} \in \{0,1\} && \forall \; ij
\end{aligned}
\label{eqn:LinkBiasBestGraph}
\end{equation}
then $\mathbf{x}$ is a (potentially unstable) graph with the maximum net payoff.\hfill\qed
\end{proposition}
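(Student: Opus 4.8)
The plan is to observe that this proposition is essentially a direct translation between the language of the objective functions and the language of the integer program, with the key point being that no stability constraints are imposed. First I would identify the feasible region: the constraints $x_{ij}=x_{ji}$ for all $ij$ together with $x_{ij}\in\{0,1\}$ for all $ij$ (and the standing assumption $x_{ii}=0$) define precisely the set $X$ of all simple undirected graphs on the node set $N$. Thus the program ranges over every graph in $X$, in contrast with the worst-stable-graph program of Expression \ref{eqn:LinkBiasWorstGraph}, which additionally carries the stability constraints of Lemma \ref{lem:BiasStable}.

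Next I would rewrite the objective. By hypothesis each player's allocation is $Y_i(\mathbf{x})=-f_i(\mathbf{x})=-\sum_j c_{ij}x_{ij}$, so the net payoff of any graph $\mathbf{x}$ is $\sum_i Y_i(\mathbf{x})=-\sum_i\sum_j c_{ij}x_{ij}$. Minimizing $\sum_i\sum_j c_{ij}x_{ij}$ over the feasible region is therefore identical to maximizing $\sum_i Y_i(\mathbf{x})$ over $X$, the two being related by a single change of sign. Hence any optimal solution $\mathbf{x}$ of Problem \ref{eqn:LinkBiasBestGraph} is exactly a graph attaining $\max_{\mathbf{x}\in X}\sum_i Y_i(\mathbf{x})$, i.e. a graph of maximum net payoff.

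Finally I would address the parenthetical ``potentially unstable.'' Because the program imposes only the graph-defining constraints and omits the constraints $s_{ij}+s_{ji}-1\le x_{ij}$, $x_{ij}\le s_{ij}$, $x_{ij}\le s_{ji}$ that characterize pairwise stability in Lemma \ref{lem:BiasStable}, the optimizer is drawn from all of $X$ rather than from the stable subset; it therefore maximizes net payoff but need not satisfy the stability conditions. I do not expect a genuine obstacle here: the only things to verify carefully are that the symmetry-plus-binary constraints exhaust $X$, and that the sign flip between the minimized cost $\sum_i f_i(\mathbf{x})$ and the net payoff $\sum_i Y_i(\mathbf{x})$ is handled correctly, noting that the double sum $\sum_i\sum_j c_{ij}x_{ij}$ matches $\sum_i f_i(\mathbf{x})$ term for term even though $\mathbf{c}$ need not be symmetric. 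The statement then follows immediately, exactly paralleling the one-line proof of the worst-graph proposition.
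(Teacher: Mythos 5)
Your proposal is correct and matches the paper's treatment: the paper states this proposition with no proof at all (just a \qed), regarding it as immediate in exactly the way you argue --- the constraints define all of $X$, and minimizing $\sum_i\sum_j c_{ij}x_{ij}$ is maximizing $\sum_i Y_i(\mathbf{x})$ by a sign flip, with instability possible only because the stability constraints of Lemma~\ref{lem:BiasStable} are omitted. Your write-up is simply a more explicit version of the same one-line argument the paper uses for the worst-graph proposition, so nothing further is needed.
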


\subsubsection{Degree Sequence Game}
In this section, we define an integer program to find the graph with the best total allocation for the players involved in the game.  Note that we are not necessarily looking for a stable graph, so the feasible region is the set of all graphs.  This will provide a baseline to evaluate the worst price that may be paid for selfish competition (e.g. the Price of Anarchy).

\begin{remark} In the general case, the following nonlinear integer programming problem will yield the graph that provides the largest net payoff:
\begin{equation}
\begin{aligned}
\max\;\; & \sum_{i} Y_i(v,\mathbf{x})\\
&x_{ij} = x_{ji} &\quad& \forall\; i < j\\
&x_{ij} \in \{0,1\} && \forall\; i,j
\end{aligned}
\end{equation}
Again, the integer programming problem in its raw form may be highly non-linear and therefore not efficient for computing the graph of interest for any possible $Y_i(v,\mathbf{x})$.
\end{remark}

Previously, there has been work on generating graphs with an arbitrary graphical degree sequence \cite{britton2006,milo2003,del2010,mihail2002}.  However this literature is mainly concerned with the algorithms to generate a graph for a graphical degree sequence.  Here we seek to find the closest graph to any degree sequence (graphical or not) and we use an optimization formulation to do this. That is, we focus on the problem in which $Y_i(v,\mathbf{x}) = -|\eta_i(x) - d_i|$. For our specific function, we formulate a math program by defining the feasible region as the set of all graphs and then minimizing the sum of the player's cost due to penalization for acquiring a degree different than desired.

\begin{equation}
\label{model: POA_best_1}
\begin{aligned}
\min\;\; & \sum_{i} \left| \sum_{j \neq i} x_{ij}-d_{i} \right|\\
s.t.\; & x_{ij} - x_{ji} = 0 \quad \forall \; i < j\\
& x_{ij} \in \{0,1\} \quad \forall\; i,j\\
\end{aligned}
\end{equation}
This nonlinear integer programming problem is easily reformulated to a linear integer programming problem:
\begin{equation}
\label{model: optimization 1B3}
\begin{aligned}
\min\;\; & \sum_{i} e_{i} \\
s.t.\;   &\sum_{j \neq i} x_{ij}-d_{i}  \leq e_{i} &\quad& \forall \;i \\
- &\sum_{j \neq i} x_{ij}+d_{i}  \leq e_{i} && \forall i \; \\
&x_{ij} - x_{ji}= 0 && \forall\; i < j\\
& x_{ij} \in \{0,1\} && \forall\; i,j\\
\end{aligned}
\end{equation}
This integer program minimizes the distance between the arbitrary degree sequence $\mathbf{d}=\{d_1,\dots,d_n\}$ and the degree sequence of a graph in the feasible region.
\begin{theorem}
The graph generated by an optimal solution to the integer program:
\begin{equation}
\label{model: optimization 1B2}
\begin{aligned}
\min\;\; & \sum_{i} e_{i} \\
s.t.\;   &\sum_{j \neq i} x_{ij}-d_{i}  \leq e_{i} &\quad& \forall\; i \\
- &\sum_{j \neq i} x_{ij}+d_{i}  \leq e_{i} && \forall\; i \\
&x_{ij} - x_{ji}= 0 && \forall\; i < j\\
& x_{ij} \in \{0,1\} && \forall\; i,j\\
\end{aligned}
\end{equation}
is a closest graph under the $\ell_{1}$-norm to a graph with degree sequence $\mathbf{d}=\{d_1,\dots,d_n\}$. \hfill\qed
\end{theorem}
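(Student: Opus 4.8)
The plan is to argue purely by equivalence of optimization problems, collapsing everything onto the definition of a closest graph in Equation \ref{eqn: defn closest}. First I would observe that the constraints $x_{ij} - x_{ji} = 0$ (for $i<j$) and $x_{ij} \in \{0,1\}$, together with the standing convention $x_{ii}=0$, cut out exactly the set $X$ of all simple graphs on $N$. Consequently, as $\mathbf{x}$ ranges over the feasible $x$-points the inner sum $\sum_{j \neq i} x_{ij}$ ranges over precisely the degrees $\eta_i(\mathbf{x})$, and every graph in $X$ is attainable. This is what will let me conclude that the minimization genuinely searches over all of $X$, with no graphicality hypothesis on $\mathbf{d}$ required.

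Next I would dispose of the auxiliary variables $e_i$. For a fixed feasible $\mathbf{x}$, the pair of inequalities $\sum_{j\neq i} x_{ij} - d_i \le e_i$ and $-\sum_{j\neq i}x_{ij}+d_i \le e_i$ is exactly the single bound $\bigl|\sum_{j\neq i} x_{ij}-d_i\bigr| \le e_i$, which already forces $e_i \ge 0$. Since the objective $\sum_i e_i$ is separable and each $e_i$ occurs only in its own pair of constraints, at any optimum each $e_i$ is driven down to its lower bound, so $e_i = \bigl|\sum_{j\neq i} x_{ij}-d_i\bigr| = |\eta_i(\mathbf{x})-d_i|$. Hence, for fixed $\mathbf{x}$, the optimal value of Problem \ref{model: optimization 1B2} equals $\sum_i |\eta_i(\mathbf{x}) - d_i| = \|\eta(\mathbf{x})-\mathbf{d}\|_1$, and minimizing over $\mathbf{x}$ shows that Problem \ref{model: optimization 1B2} shares its optimal value and its set of optimal graphs with the absolute-value program in Problem \ref{model: POA_best_1}.

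Finally I would close by invoking the definition directly. Because the $\mathbf{x}$-component of an optimal solution to Problem \ref{model: optimization 1B2} minimizes $\|\eta(\mathbf{x})-\mathbf{d}\|_1$ over all $\mathbf{x} \in X$, it satisfies $\|\eta(\mathbf{x})-\mathbf{d}\|_1 = \min_{\mathbf{x}'\in X}\|\eta(\mathbf{x}')-\mathbf{d}\|_1$, which is precisely Equation \ref{eqn: defn closest}, so $\mathbf{x}$ is a closest graph to $\mathbf{d}$. The only genuine care-point, rather than a true obstacle, is the tightness argument in the second step: I must make explicit that at optimality the slacks $e_i$ are attained with equality and that nothing in the model couples distinct $e_i$'s, so the linearized program faithfully reproduces the $\ell_1$ objective of Problem \ref{model: POA_best_1}. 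Everything else is bookkeeping, and the surjectivity of the feasible region onto $X$ is what guarantees the minimum is taken over the full graph set and not some proper subset.
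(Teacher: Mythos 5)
Your proposal is correct and takes essentially the same route as the paper, which in fact states this theorem without an explicit proof (the \qed marks it as immediate from the preceding discussion that Problem \ref{model: optimization 1B2} is the linear reformulation of Problem \ref{model: POA_best_1} over the set of all graphs). Your explicit verification — that the feasible $x$-points are exactly $X$, that the slacks $e_i$ are tight at optimality so the objective reduces to $\|\eta(\mathbf{x})-\mathbf{d}\|_1$, and that Equation \ref{eqn: defn closest} then applies — is precisely the bookkeeping the paper leaves to the reader.
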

\begin{example}
Now, the price of anarchy is simply the difference of the objective function value from \emph{the worst graph} (Problem (\ref{model: WorstGraphIP})) to \emph{the best graph} (Problem (\ref{model: optimization 1B2})). Continuing Example \ref{ex:Regular}, the degree sequence in question is graphical. Thus, a globally optimal solution is one in which each player is adjacent to 5 other players. This is shown in Figure \ref{fig:StableBestGraph}.
\begin{figure}[htbp]
\centering
\includegraphics[scale=0.3]{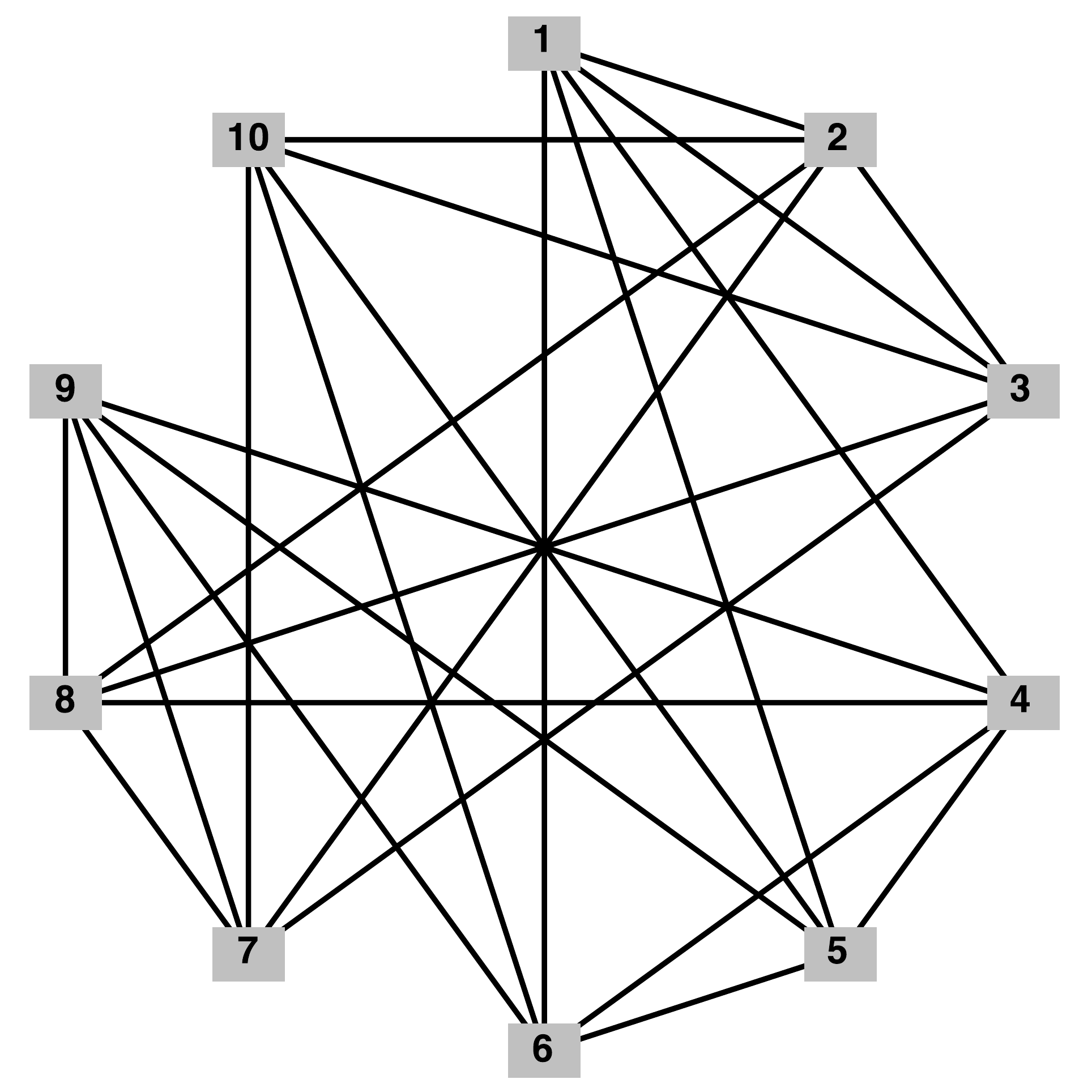}
\caption{A best solution to the problem in which 10 players each desired to link with 5 players. The objective function in this case is 0 and moreover, this graph is stable.}
\label{fig:StableBestGraph}
\end{figure}
We note that this graph is not only a globally optimal solution, it is also a pairwise stable graph. As before, we are assuming that $f_i(\eta(\mathbf{x})) = |\eta_i(x) - 5|$. Since the objective function value is $0$, it is easy to see that that price of anarchy as defined in Equation \ref{eqn: POA2} is 5.
\end{example}

\subsection{Complete Example}
Using a link bias game, we can illustrate a complete example of the process by which a modeler might make use of these techniques. Suppose that after studying a 10 player decentralized network, a cost matrix $\mathbf{c}$ is constructed to identify link bias between players. This cost matrix might be:
\begin{equation}
\mathbf{c} = \left[ \begin {array}{cccccccccc} 0&-85&-29&13&-25&-94&-19&-97&10&10
\\ \noalign{\medskip}75&0&9&32&78&27&-55&-38&-44&-61
\\ \noalign{\medskip}-85&19&0&48&23&18&71&-36&26&-26
\\ \noalign{\medskip}-19&25&35&0&-67&18&-50&-69&-3&-20
\\ \noalign{\medskip}57&17&80&51&0&63&-17&69&-62&-78
\\ \noalign{\medskip}83&81&20&20&-81&0&35&-15&-83&-4
\\ \noalign{\medskip}-45&89&39&-46&-36&-51&0&2&9&5
\\ \noalign{\medskip}68&92&-35&35&-88&51&-86&0&88&-91
\\ \noalign{\medskip}58&-2&26&-54&91&38&50&99&0&-44
\\ \noalign{\medskip}-43&-46&-74&-17&-62&-38&-94&-59&63&0\end {array}
 \right]
\end{equation}
Using this information, the worst net payoff stable graph can be identified from Problem \ref{eqn:LinkBiasWorstGraph}. The value to the organization under this strategy is 1077 units of reward. If the group were organized centrally, the value to the group would be 1487 units of reward, computed from  Problem \ref{eqn:LinkBiasBestGraph}. The resulting graphs are shown in Figure \ref{fig:InitialGraph}.
\begin{figure}[htpb]
\centering
\subfigure[Worst Graph]{\includegraphics[scale=0.3]{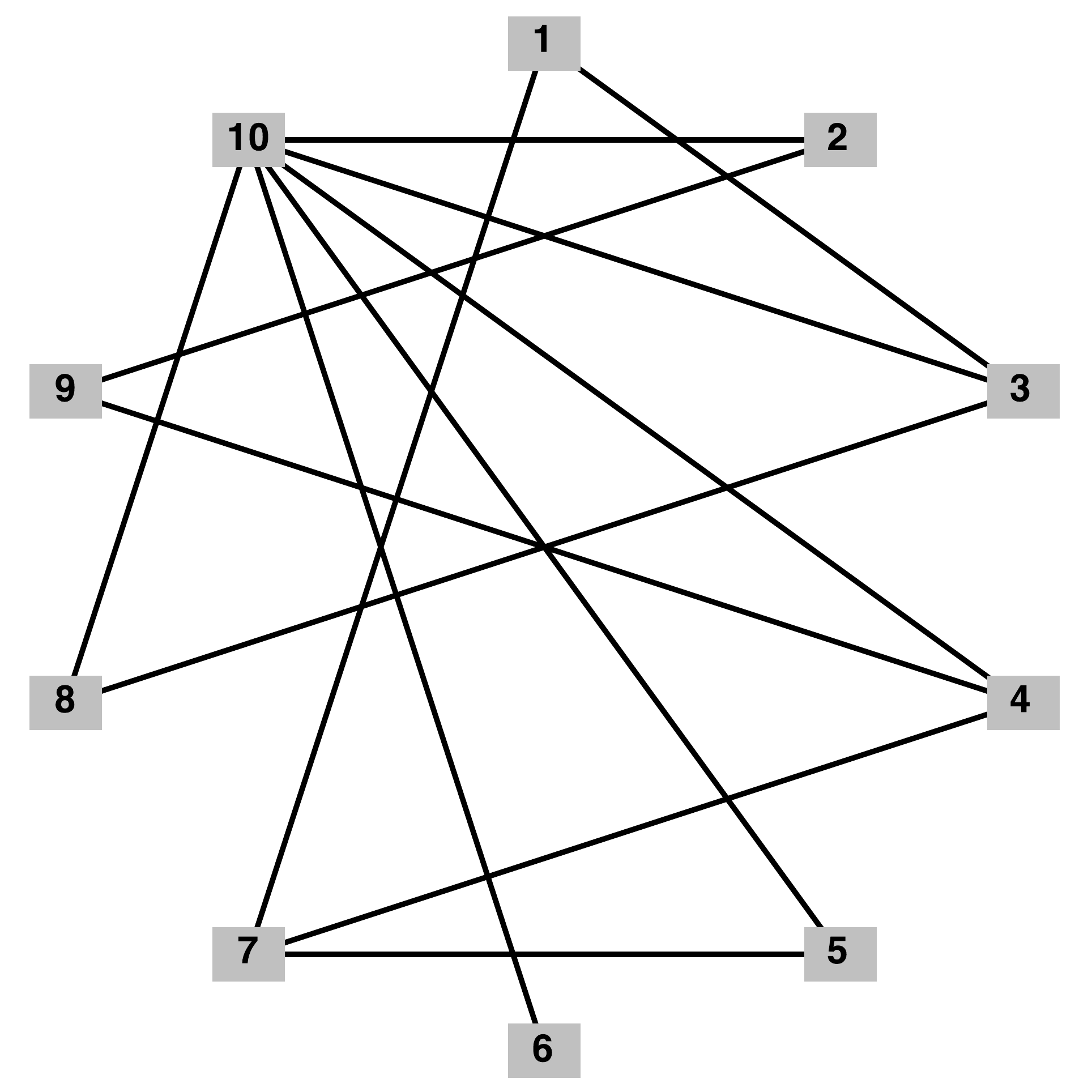}}
\subfigure[Best Graph]{\includegraphics[scale=0.3]{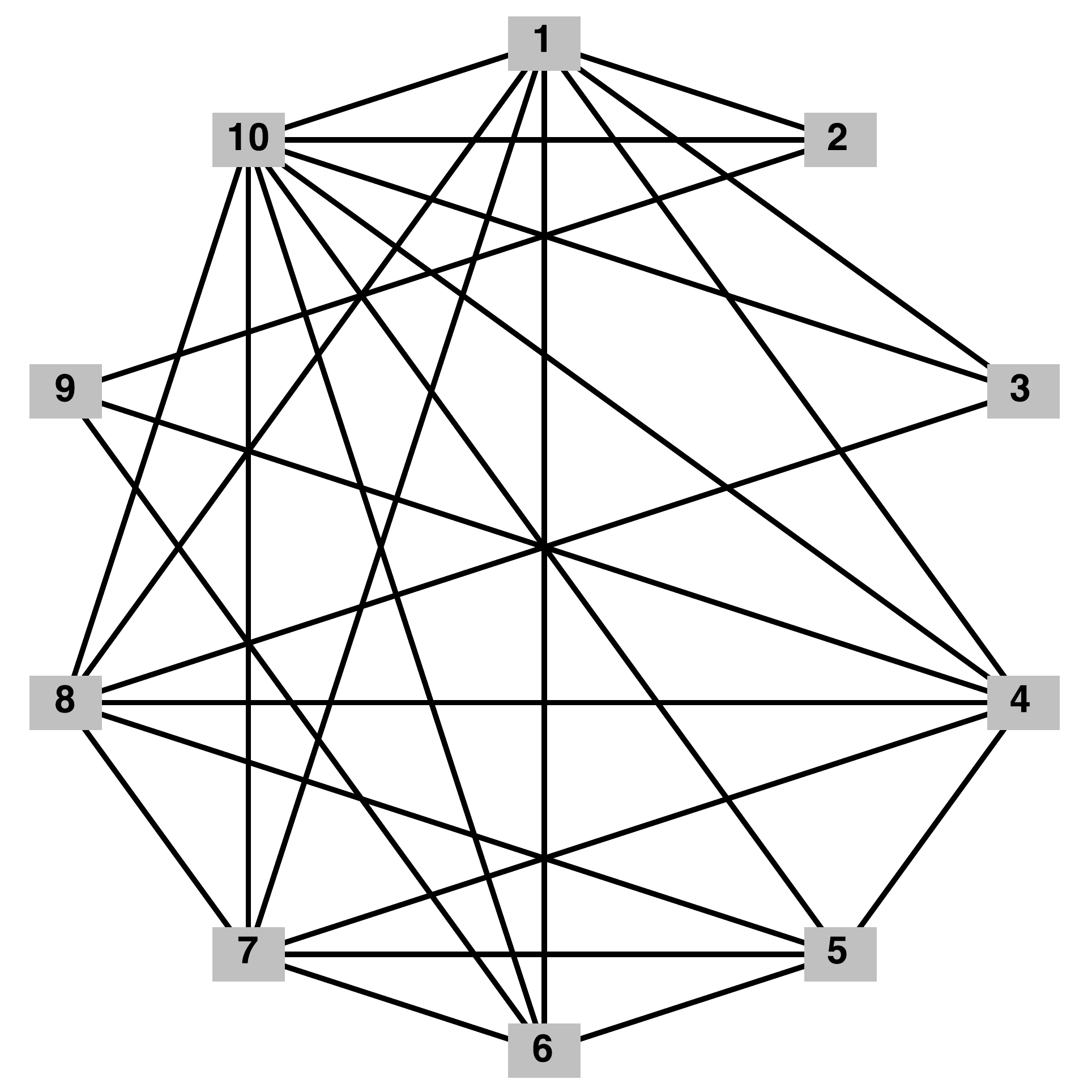}}
\caption{The worst stable graph and best centrally coordinated graph using the cost matrix $\mathbf{c}$.}
\label{fig:InitialGraph}
\end{figure}
Since the two community payoffs are positive, we can analyze the price of anarchy using the traditional ratio method and we see that decentralized play leads to a approximately $\sim 72\%$ of the payoff that would come from centralized coordination. Naturally, if the player's true network resembled the centrally coordinated graph rather than the uncoordinated (stable) graph, we might suspect this network was \textit{not} selfishly coordinating and our payoff matrix was incorrect.

Suppose now we isolate a vertex and target it for kill or capture. Without loss of generality, we have identified that it is possible to kill or capture either Vertex 10 or Vertex 1. In an ordinary network analysis, Vertex 10 is clearly a high priority target since it has the highest degree. We can explore the impact of removing Vertex 10 from the network by deleting the 10th row and column from $\mathbf{c}$ and recomputing. The resulting graphs are shown in Figure \ref{fig:SecondGraph}.
\begin{figure}[htpb]
\centering
\subfigure[Worst Graph]{\includegraphics[scale=0.3]{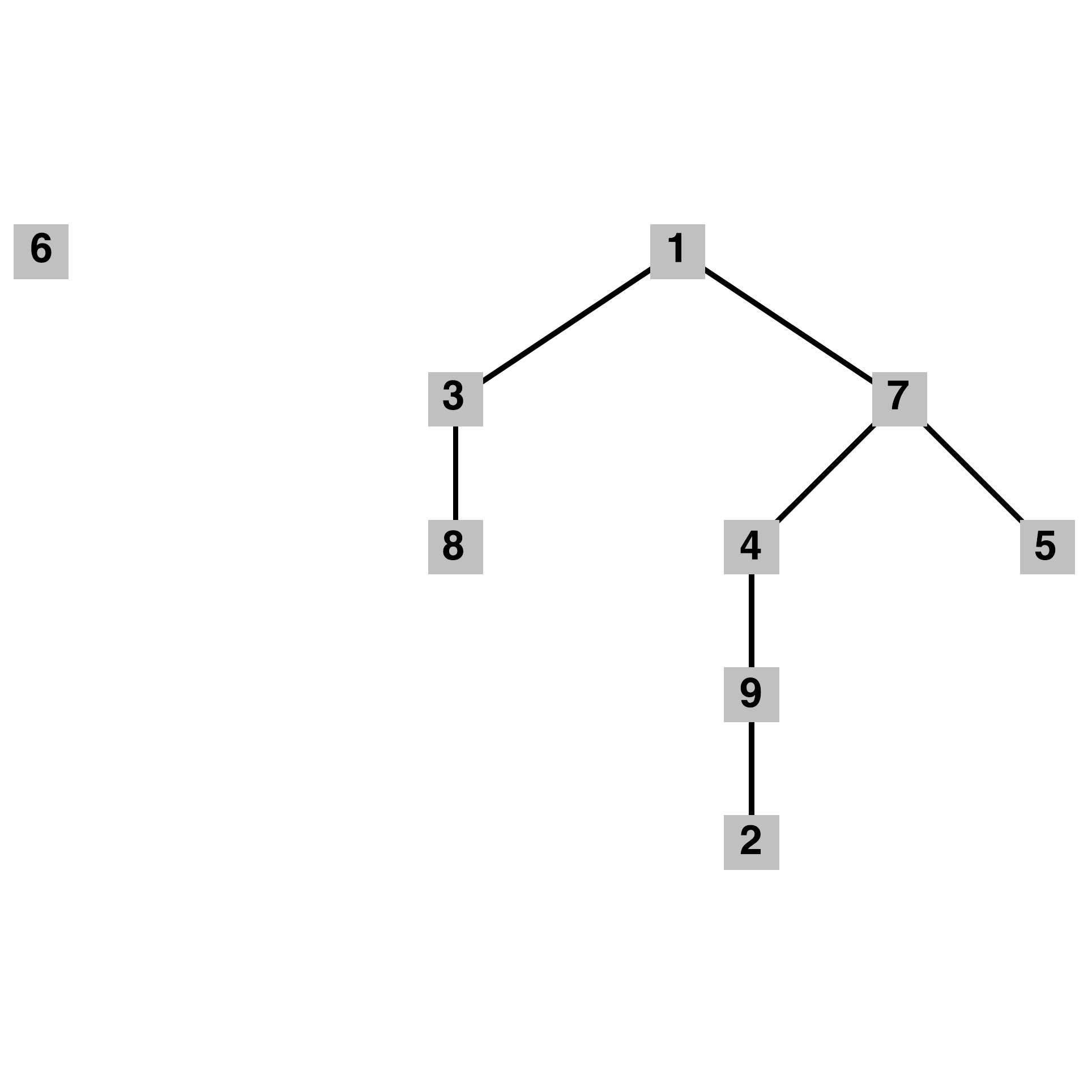}}
\subfigure[Best Graph]{\includegraphics[scale=0.3]{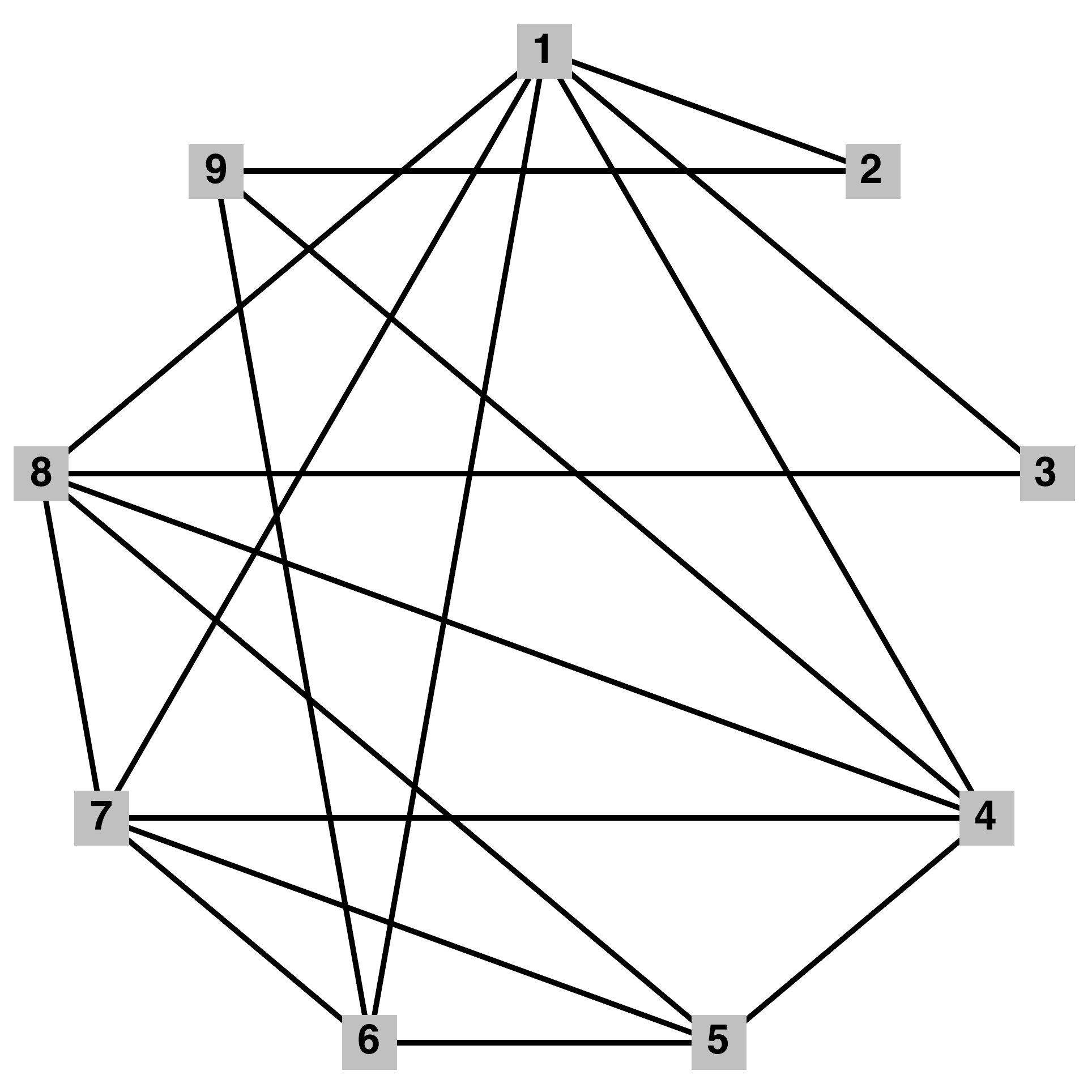}}
\caption{The worst stable graph and best centrally coordinated graph after removing Player 10.}
\label{fig:SecondGraph}
\end{figure}
In this case, the uncoordinated network fragments into a tree and a single, isolated vertex. Furthermore, the new payoffs are $501$ reward units for the stable network and $789$ reward units for the new centrally coordinated network. The new price of anarchy (as a ratio) is $\sim 63\%$, suggesting we have seriously impacted the ability of the network to achieve results as good as a centrally coordinated network.

However, suppose it was easier to remove Vertex 1. If we execute this mission, the resulting graphs are shown in Figure \ref{fig:ThirdGraph}.
\begin{figure}[htpb]
\centering
\subfigure[Worst Graph]{\includegraphics[scale=0.3]{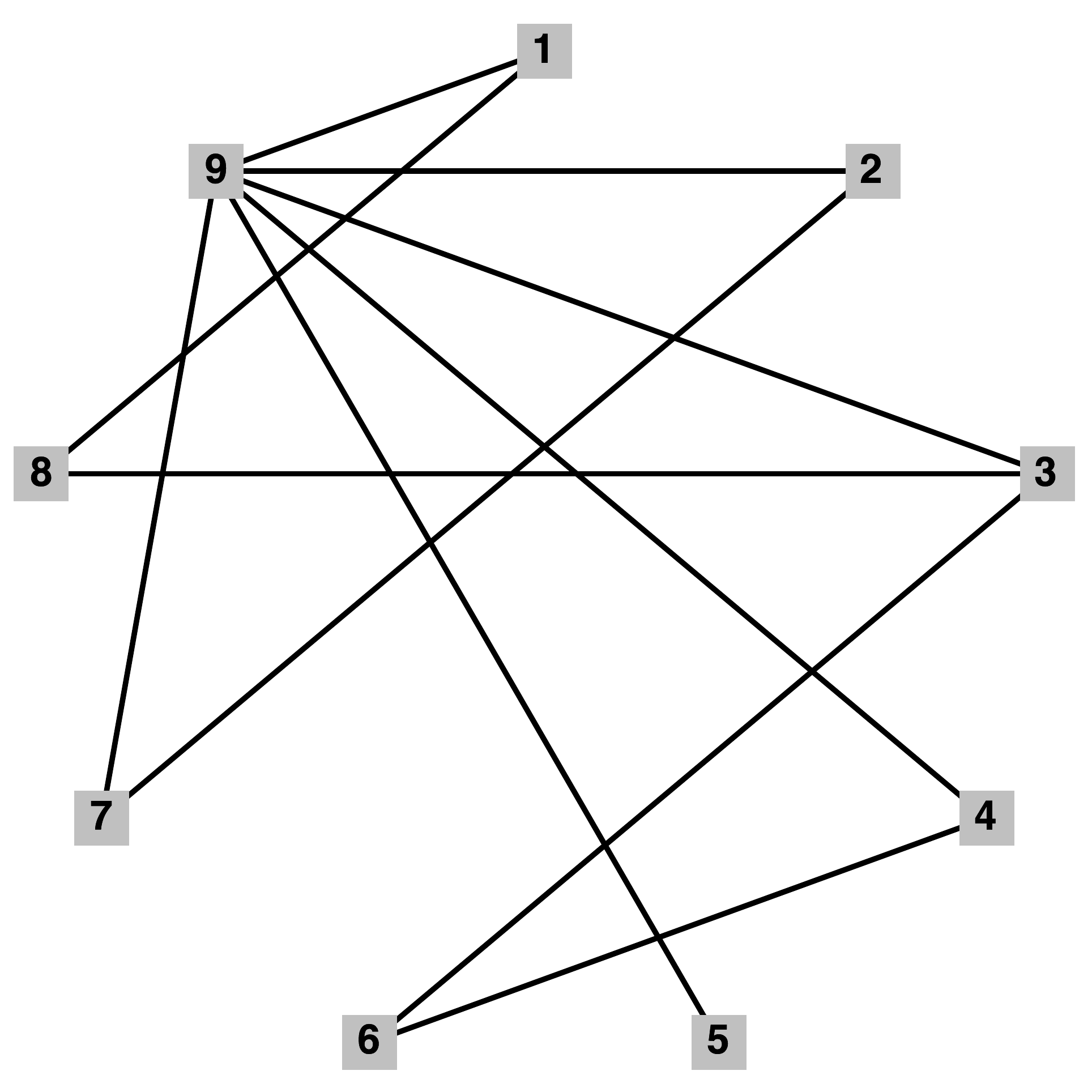}}
\subfigure[Best Graph]{\includegraphics[scale=0.3]{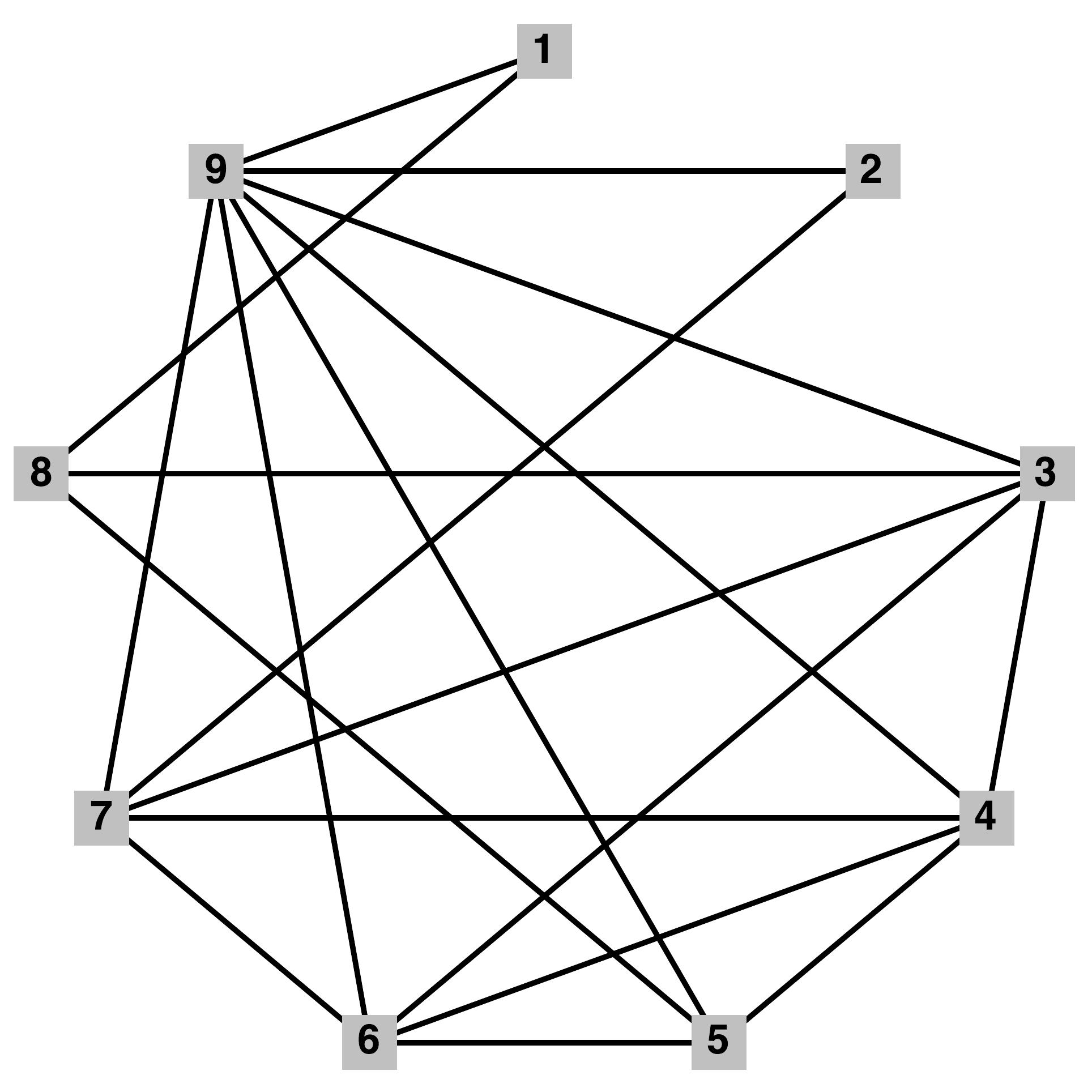}}
\caption{The worst stable graph and best centrally coordinated graph after removing Player 1.}
\label{fig:ThirdGraph}
\end{figure}
In this case, the uncoordinated graph does not fragment at all and more importantly the new payoffs are 899 reward units in the uncoordinated stable graph and 1220 reward units in the centrally coordinated graph. The new price of anarchy (as a ratio) is $\sim74\%$. Removing Vertex 1 actually improves the relative performance of the network with respect to a centrally coordinated network. Thus we might conclude that given the opportunity to remove a vertex, we should choose to remove Vertex 10 rather than Vertex 1, even though these vertices have similar network characteristics. It is interesting to note in this example there is little correlation between the traditional measures of vertex importance and their impact on the resulting network's price of anarchy. Eigenvector centrality and vertex degree for the original worst stable graph (the assumed initial condition) and the resulting change in price of anarchy are summarized in the table below. Price of anarchy difference is computed as the original price of anarchy ($\sim 72\%$) minus new price of anarchy once a vertex is removed. We also include the Communal Change in Utility. This is the difference in the communal objective function in the original graph (10 players) and the communal objective function in the graph that results from removing a vertex (9 players). The computation is done for each player who could be removed. 
\begin{table}[htbp]
\centering
\begin{tabular}{|c|c|c|c|c|}
\hline
Removed Vertex	& Degree	& Eig. Centrality	& POA Diff & Communal Utility Change\\
\hline
1	&2	&0.070066565	&-0.012608178 & 178\\
\hline
2	&2	&0.085041762	&0.026391872 & 153\\
\hline
3	&3	&0.120257982	&0.065375238 & 285\\
\hline
4	&3	&0.115436794	&0.009530895 & 190\\
\hline
5	&2	&0.093603947	&0.011948301 & 193\\
\hline
6	&1	&0.063208915	&-0.03956057 & 42\\
\hline
7	&3	&0.092026999	&-0.072036296 & 213\\
\hline
8	&2	&0.102880448	&-0.05390475 & 221\\
\hline
9	&2	&0.066087279	&-0.003131446 & 103\\
\hline
10	&6	&0.191389311	&0.089296079 & 576\\
\hline
\end{tabular}
\caption{Summary Table for traditional network importance measures and the corresponding impact on price of anarchy.}
\label{tab:SummaryStats}
\end{table}
The information in the table is illustrated in Figures \ref{fig:POAvsStuff} and \ref{fig:UtilvsStuff}.
\begin{figure}[htbp]
\centering
\subfigure[Degree]{\includegraphics[scale=0.3125]{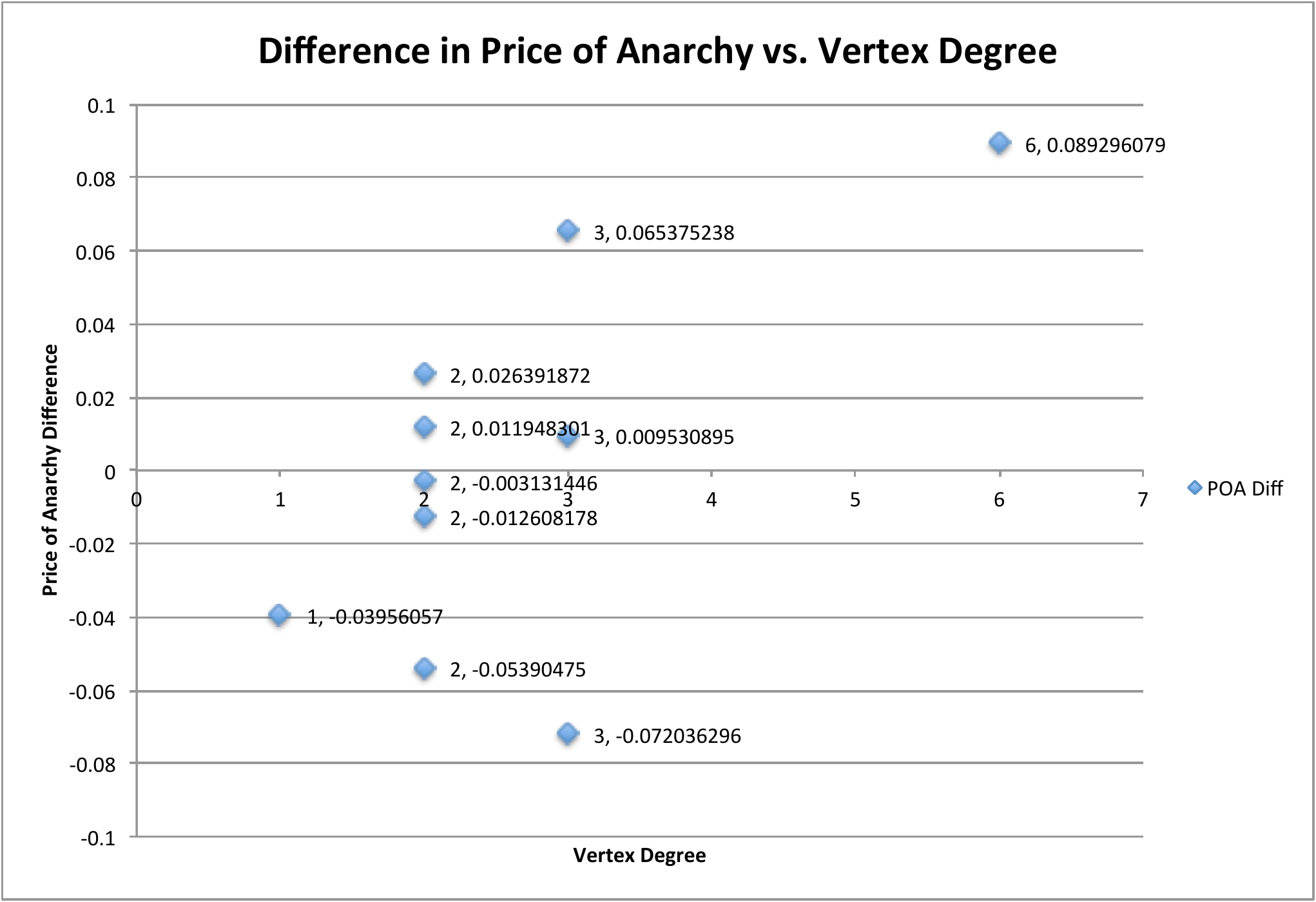}}
\subfigure[Eig. Cent.]{\includegraphics[scale=0.35]{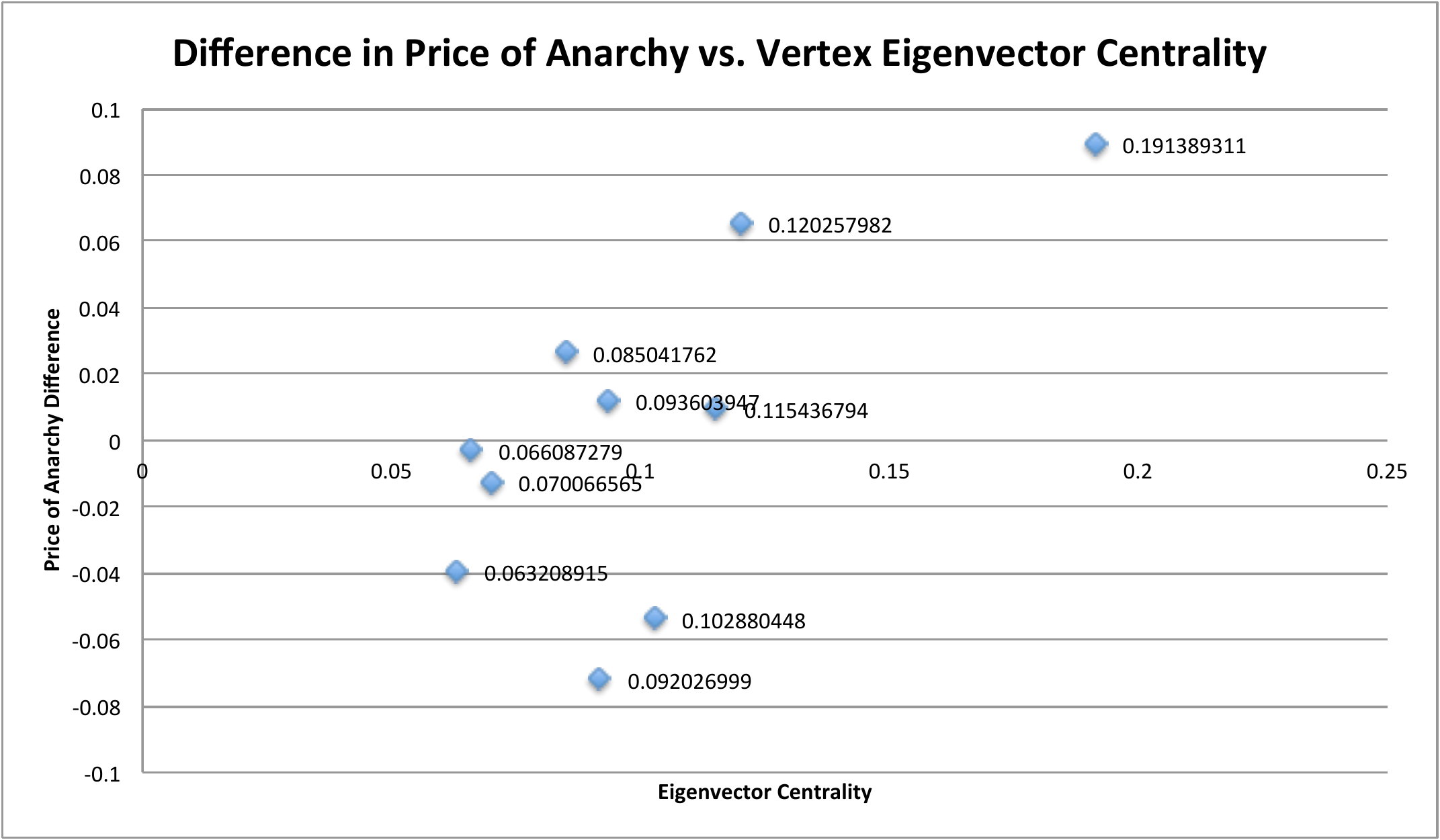}}
\caption{The price of anarchy change as a function of various vertex importance measures is illustrated.}
\label{fig:POAvsStuff}
\end{figure}
It is interesting to note the lack of correlation in these plots. Obviously this is anecdotal evidence, but suggests interesting future work in so far as price of anarchy change may be a new way to measure the importance of a vertex in a social network of interest.
\begin{figure}[htbp]
\centering
\subfigure[Degree]{\includegraphics[scale=0.35]{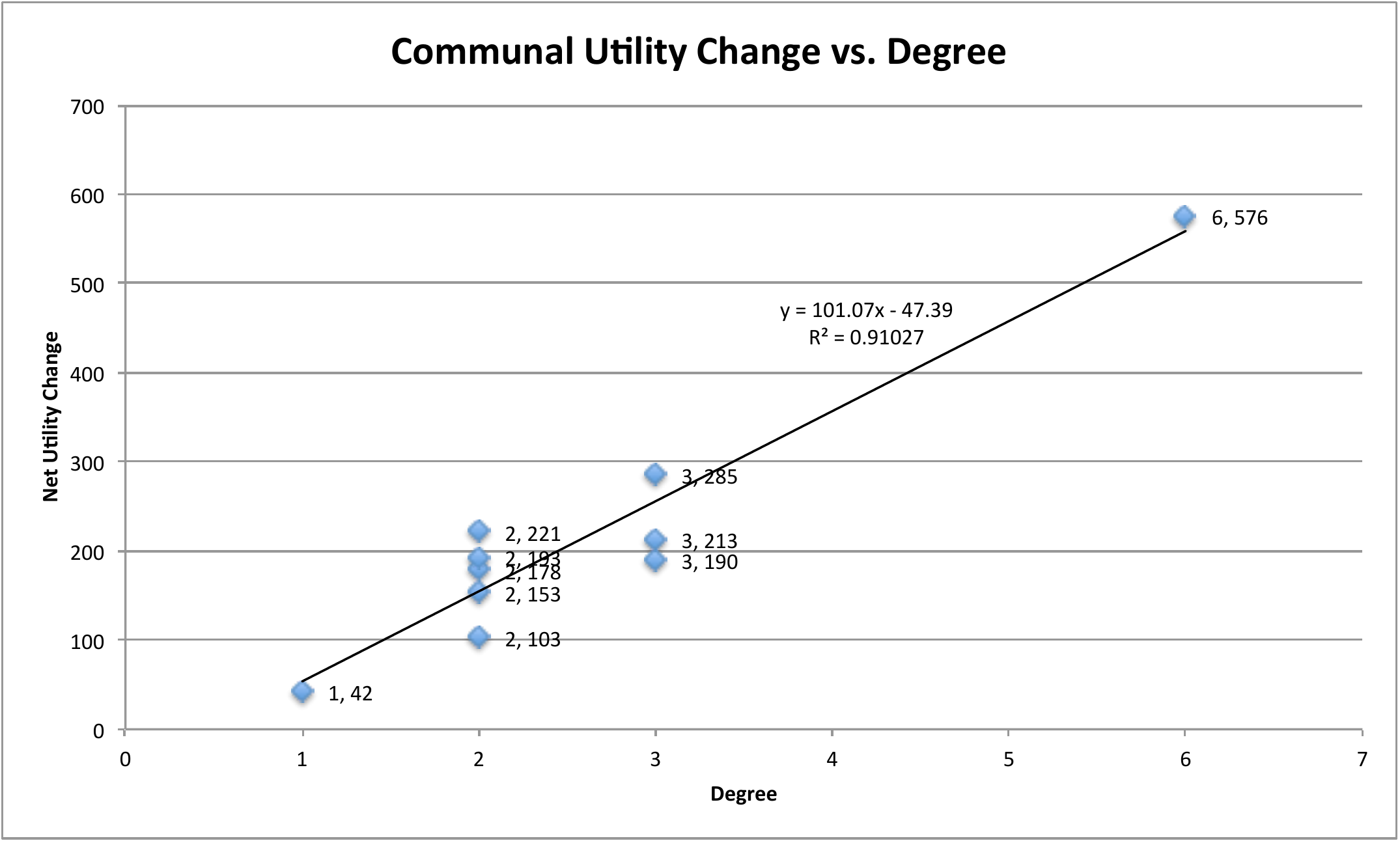}}
\subfigure[Eig. Cent.]{\includegraphics[scale=0.345]{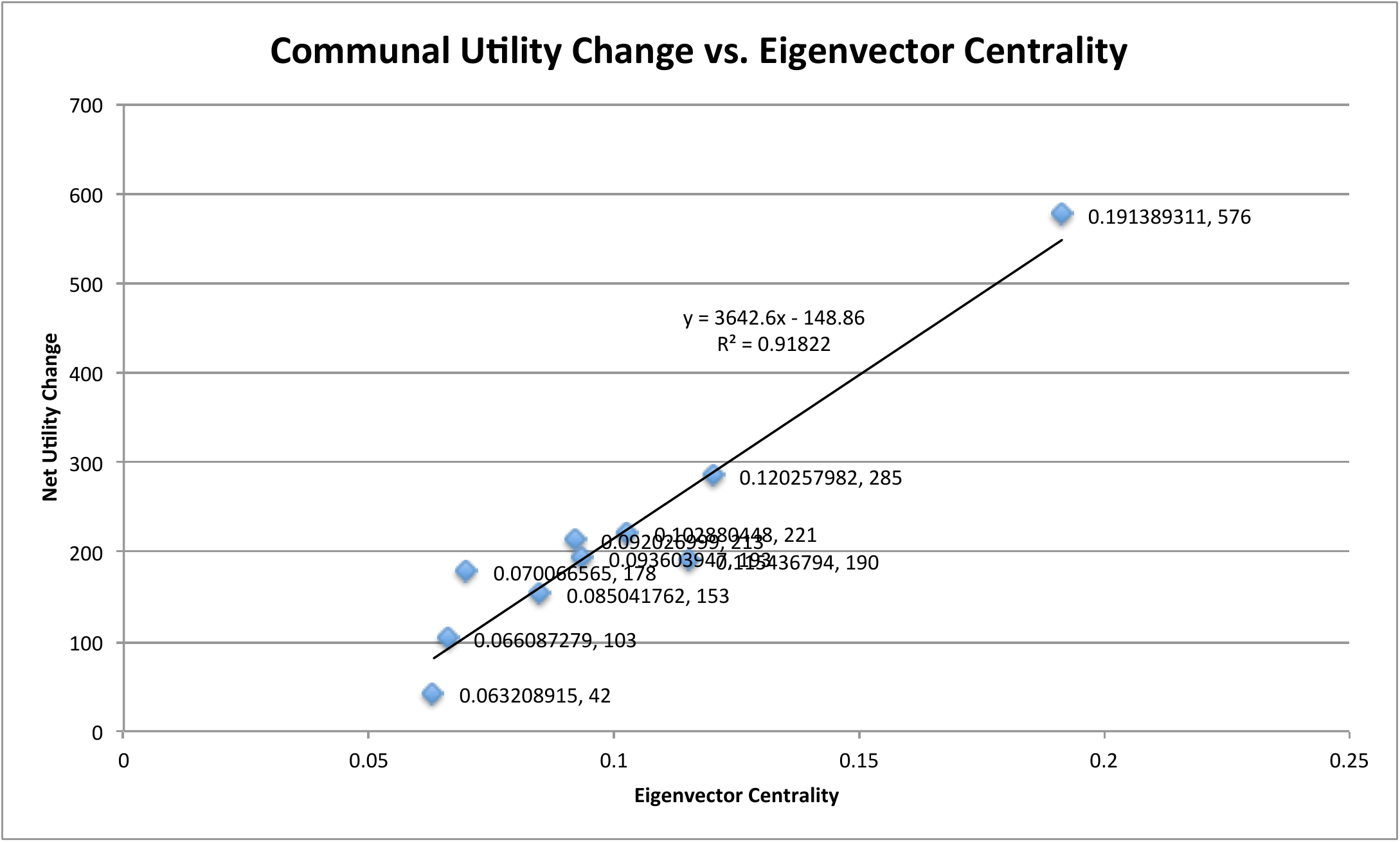}}
\caption{The net utility change as a function of various vertex importance measures is illustrated}.
\label{fig:UtilvsStuff}
\end{figure}
We contrast these plots to the plots in Figure \ref{fig:UtilvsStuff}, where we illustrate the relationship between traditional network metrics and the change in the communal utility. There is clearly a substantial correlation between the degree of a vertex, it's eigenvector centrality and the extent to which the removal of this vertex impacts the communal utility. This is expected since for this game, we can compute the communal utility change when vertex $i$ is removed ($\Delta U_{i}$) as shown in Equation \ref{eqn:CUC}.
\begin{equation}
\label{eqn:CUC}
\Delta U_{i}=-\sum_{j}s_{ij}s_{ji}(c_{ij}+c_{ji})
\end{equation}
For each vertex $i$, when it is the removed vertex, $\Delta U_{i}$ simply sums up the lost utility for each vertex $i$ and $j$ for each link that existed in the network before removal.  When $s_{ij}=0$ or $s_{ji}=0$, this implies the link did not exist and hence, no contribution is made to $\Delta U_{i}$.  Alternatively, when $s_{ij}=s_{ji}=1$, the contribution made to $\Delta U_{i}$ is the sum of the utilities received by vertex $i$ and $j$, which is $-(c_{ij}+c_{ji})$.  Since, $s_{ij}=1$ only if $c_{ij}<0$ and $s_{ji}=1$ only if $c_{ji}<0$, this means that when the link does exist $-(c_{ij}+c_{ji})>0$.  Hence, only positive contributions can be made to $\Delta U_{i}$ for each link it has and hence it must be positively correlated with the degree of vertex $i$.  However, since utility received by a link may be quite asymmetric when $c_{ij}<<c_{ji}$, the communal utility change may differ greatly between two nodes with the same degree.

This example illustrates the importance of understanding what network metrics mean in a given application. Because each player (and thus the community of players) derives benefit from being connected to specific players in this game, it is relatively straightforward to see that network metrics like eigenvector centrality and degree will be highly correlated to value functions like communal utility change; that is, how much removing a single player will decrease the total payoff to the network. On the other hand, it is clear that there is little relationship between the price of anarchy difference and the network centrality measure of a vertex. Thus, if the goal is to degrade the network's ability to accumulate utility, then using eigenvector centrality as a proxy measure \textit{in the link bias game} should be acceptable. However, if the objective is to cause the network to function in the least centralized way possible, then eigenvector centrality may not be as good a proxy measure.

The is most clear in the case when we attempt to optimize both of these objectives in a kill or capture mission at once. Consider Table \ref{tab:SummaryStats}: Suppose a decision maker's objective is to simultaneously maximize the decrease in communal benefits to the network and increase the price of anarchy as much as possible. Clearly, Vertex 10 is the ideal target; in fact it is the Pareto optimal solution for that multi-criteria optimization problem. If Vertex 10 is not accessible, then Vertex 3 is the next obvious target. It too has an undominated payoff pair for that problem (the net benefit decrease is 285 reward units and the price of anarchy decreases from $72\%$ to $66\%$). Interestingly, these two vertices have the highest eigenvector centrality but not necessarily the uniquely highest degree (in the case of Vertex 3). If Vertex 3 also cannot be killed or captured, then the problem becomes more complicated. Eigenvector centrality is no longer a good proxy measure since the change in price of anarchy is not consistent with the change in communal utility.

\section{Simulation}
Networks that are pairwise stable for a game are pairwise stable because no player has an incentive to drop a link and no two players have an incentive to add an additional link.  The price of anarchy measures how much worse the worst possible stable graph is from the best for a particular game.  However, this analysis does not offer insight into the actual formation of games. The simulation of network formation can offer some additional insight. Moreover, for exceptionally large games solving the Integer Programming Problems associated with computing the price of anarchy may be difficult. While it is relatively easy to solve Problem \ref{eqn:LinkBiasBestGraph} or Problem \ref{model: POA_best_1}, it may be difficult to solve Problem \ref{model: WorstGraphIP}.

\subsection{Methodology}
\label{sec: methodology}
In this subsection, we outline the methodology for simulating the formation of a network.  We again denote the current graph as $\mathbf{x}$ where:
\begin{equation}
x_{ij} =x_{ji} =
\begin{cases}
1 & \text{if node $i$ is linked to node $j$}\\
0 & \text{else}
\end{cases}
\label{eqn:xdef2}
\end{equation}
We denote the matrix $\mathbf{P}$ as the matrix representing potential links:
\begin{equation}
P_{ij} =P_{ji} =
\begin{cases}
1 & \text{if node $i$ \emph{could} link to node $j$}\\
0 & \text{else}
\end{cases}
\label{eqn:pdef1}
\end{equation}
A link $ij$ is a potential link if:
\begin{enumerate}
  \item Link $ij$ currently does not exist $(x_{ij}=x_{ji}=0)$
  \item Node $i$ could benefit from linking to node $j$ $(i.e., \sum_{j}x_{ij}<d_{i})$
  \item Node $j$ could benefit from linking to node $i$ $(i.e., \sum_{i}x_{ji}<d_{j})$
\end{enumerate}
Now, we introduce our simulation algorithm:
\begin{enumerate}
  \item Initialize $x_{ij}=0$, $P_{ij}=1$ for all links $ij$
  \item While there exists a potential link (i.e., $\sum_{ij}P_{ij}>0$):
  \begin{enumerate}
    \item Randomly select a potential link $ij$ (i.e. randomly choose a pair $(i,j)$ such that $P_{ij}=1$)
    \item Add link $ij$ to the graph, set $x_{ij}=1$,$x_{ji}=1$
    \item Delete link $ij$ from the potential links, set $P_{ij}=0$,$P_{ji}=0$
    \item If node $i$ cannot benefit from linking to any more nodes $(\sum_{k}x_{ik} \geq d_{i})$, then remove all of their potential links.  That is, set $P_{ik}=0$ and $P_{ki}=0$ for all $k$
    \item If node $j$ cannot benefit from linking to any more nodes $(\sum_{k}x_{jk} \geq d_{j})$, then remove all of their potential links.  That is, set $P_{jk}=0$ and $P_{kj}=0$ for all $k$
  \end{enumerate}
\end{enumerate}

\subsection{Numerical Example}
Suppose that we want the degree sequence of a stable graph that results from playing the game described in Theorem \ref{thm:GeneralArbitraryDist} to have a power law degree distribution.  We embed this into the objectives of the players, so the resulting graph has the proper distribution. Let $n=100$ players attempt to minimize their cost function
\begin{displaymath}
f_{i}(\eta_{i}(g))=f(\eta_{i}(g) - k_{i})=|\eta_{i}(g) - k_{i}|
\end{displaymath}
where the parameter $k_i$ for each player and the degree distribution of $k_{i}$ values may be found in Table \ref{fig:ktable}.  The distribution of the $k_{i}$ values form an approximate (with rounding to integers) power law distribution as illustrated in the histogram in Figure \ref{fig:HistogramPowerLaw}. We note that this degree sequence is graphical. Thus the solution to Problem \ref{model: optimization 1B2} yields an objective function that is 0.


\begin{figure}[htbp]
\centering
\subfigure{\begin{tabular}[b]{|c|c|c|}
\hline
Node(s) & Number of Nodes   & $k_{i}$  \\
\hline
1-75    &   75  &   1\\
\hline
76-89   &   14  &   2\\
\hline
90-94   &   5   &   3\\
\hline
95-96   &   2   &   4\\
\hline
97      &   1   &   5\\
\hline
98      &   1   &   6\\
\hline
99      &   1   &   7\\
\hline
100     &   1   &   8\\
\hline
\end{tabular}}
\caption{$k_{i}$ values and degree distribution}
\label{fig:ktable}
\end{figure}
\begin{figure}[htbp]
\centering
\includegraphics[scale=0.4]{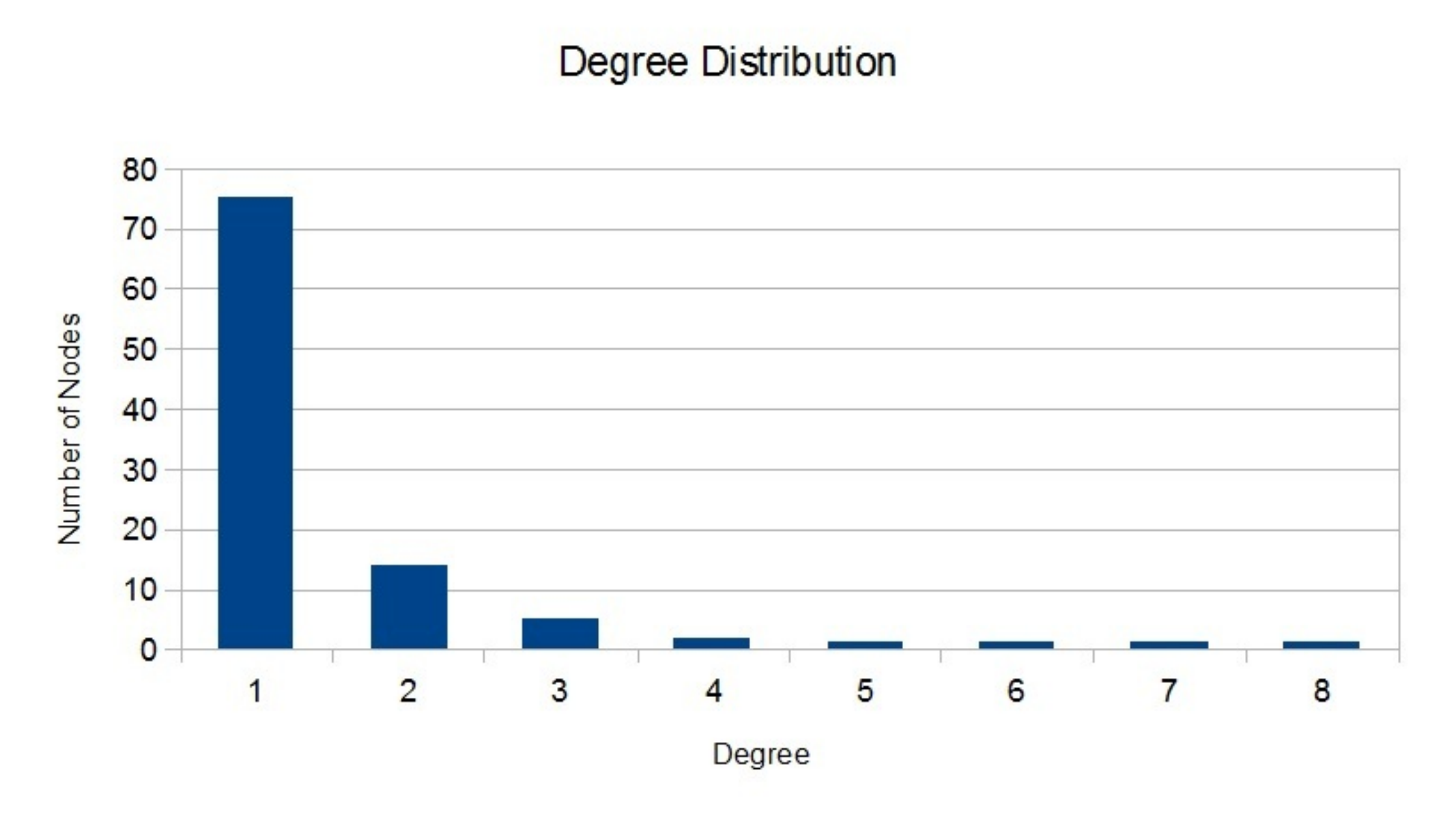}
\caption{The $k_{i}$ values follow an approximate power law distribution}
\label{fig:HistogramPowerLaw}
\end{figure}
%
The price of anarchy is the difference of the objective function value from \emph{the worst graph} (Problem (\ref{model: WorstGraphIP})) to \emph{the best graph} (Problem (\ref{model: optimization 1B2})).


We simulated this game $100$ times using the methodology from Section \ref{sec: methodology} to find further insight.  In Figure \ref{fig:distnStats} we show the simulation statistics for the degree distribution.  For example, we see in Figure \ref{fig:ktable} that five nodes would optimize their objectives if they had a degree of three.  In Figure \ref{fig:distnStats}, we see that in all of the simulations, the minimum number of nodes with a degree of three was four and the maximum number of nodes with a degree of three was six.  Since, the twenty-fifth and seventy-fifth percentile was five, we know that in more than half of the simulation runs, five nodes had a degree of three and optimized their objective.  In the other half of the runs, there was only one node too many or one node too few with a degree of three, this small variation indicates that most nodes get close to their optimum degree resulting in a rather small price of anarchy.  The simulation statistics are also visually presented as a box plot in Figure \ref{fig:boxplot}
\begin{figure}[htbp]
\centering
\subfigure{\begin{tabular}[b]{|c|c|c|c|c|c|}
\hline
Degree & min & $25^{th}$ & median & $75^{th}$ & max\\
\hline
1&  75 &   75  &    75   &  75  &   76 \\
\hline
2&  13 &   14  &    14   &  14  &   15 \\
\hline
3&  4  &   5   &     5   &  5   &   6  \\
\hline
4&  1  &   2   &     2   &  3   &   4 \\
\hline
5&  0  &   1   &     1   &  2   &   3 \\
\hline
6&  0  &   1   &     1   &  2   &   2 \\
\hline
7&  0  &   0   &     1   &  1   &   2 \\
\hline
8&  0  &   0   &     0   &  1   &   1 \\
\hline
\end{tabular}}
\caption{Degree Distribution Simulation Results}
\label{fig:distnStats}
\end{figure}
\begin{figure}[htbp]
\centering
\includegraphics[scale=0.4]{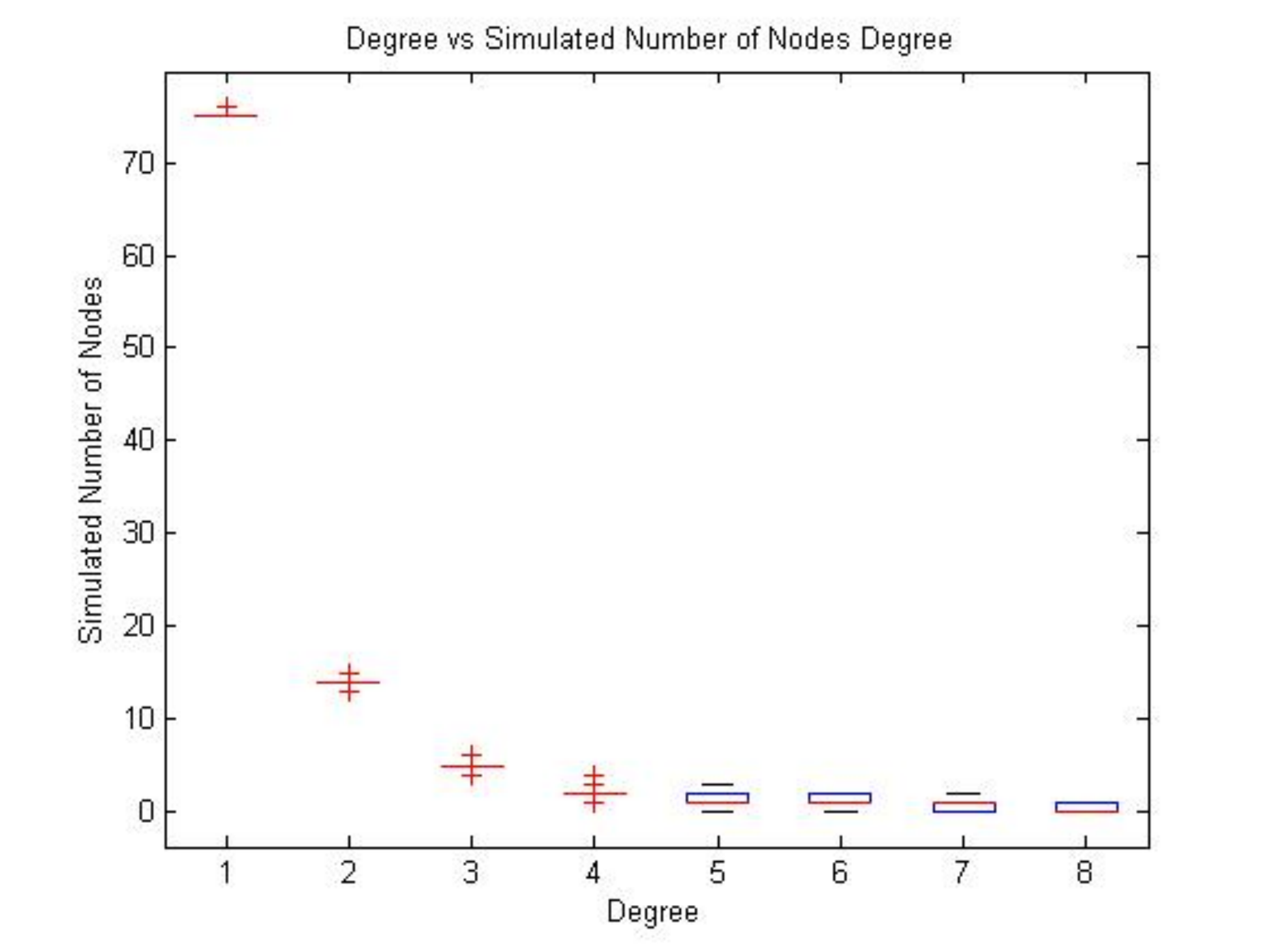}
\label{fig:boxplot}
\end{figure}
In each simulation run, we calculated the price of anarchy and plotted the results as a histogram in Figure \ref{fig:HistogramPOA}. Note the largest price of anarchy is 10, suggesting that this is the \textit{true} price of anarchy for the system. That is, the worst possible outcome of communal utility in competitive play minus the best outcome in centralized decision making (which is zero, since the degree sequence given is graphical).
\begin{figure}[htbp]
\centering
\includegraphics[scale=0.6]{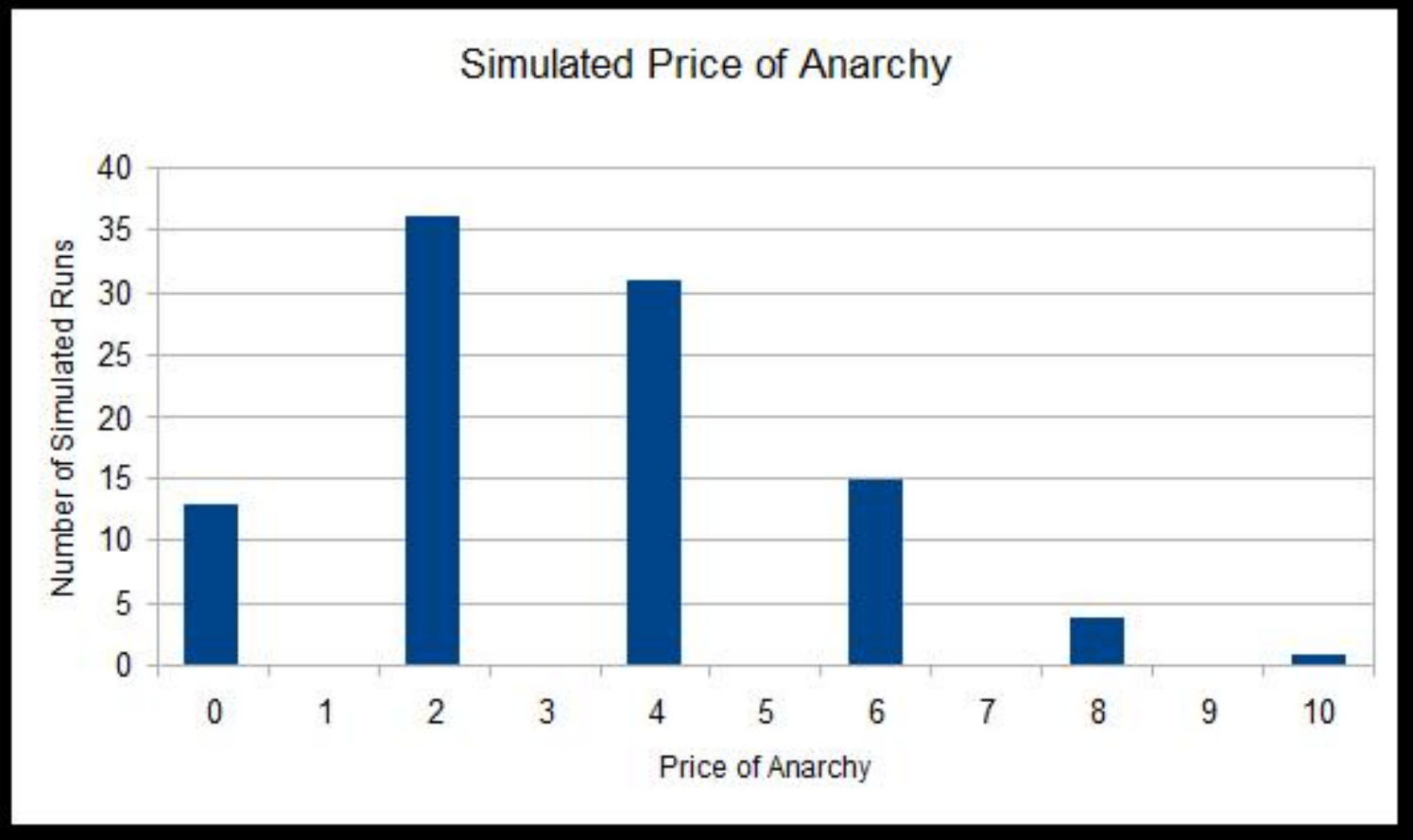}
\caption{The empirical distribution of the price of anarchy. }
\label{fig:HistogramPOA}
\end{figure}
The price of anarchy is rather low in most simulation runs.  We investigated the distribution of the contributions to the price of anarchy.  As shown in Figure \ref{fig:POAbyDegree}, the contributions to the price of anarchy were of higher magnitude and made more often by nodes with a greater $k_{i}$ value.  This means that players that desired more links were more often more unhappy than other nodes who desired a lower degree.  That being said, Figure \ref{fig:POAbyDegree} and Figure \ref{fig:HistogramPOA} show that most player's attain their desired degree in most of the simulations.
\begin{figure}[htbp]
\centering
\subfigure{\begin{tabular}[b]{|c|c|c|c|c|c|c|c|c|}
\hline
Degree & min & $10^{th}$ & $25^{th}$ & median & $75^{th}$ & $90^{th}$& $95^{th}$ & max\\
\hline
1&  0  &   0  &   0   &    0    &  0   &   0   &   0   &   0\\
\hline
2&  0  &   0  &   0   &    0    &  0   &   0   &   0   &   1\\
\hline
3&  0  &   0  &   0   &    0    &  0   &   0   &   0   &   1\\
\hline
4&  0  &   0  &   0   &    0    &  0   &   0   &   0   &   2\\
\hline
5&  0  &   0  &   0   &    0    &  0   &   0   &   1   &   3\\
\hline
6&  0  &   0  &   0   &    0    &  1   &   1   &   2   &   2\\
\hline
7&  0  &   0  &   0   &    0    &  2   &   3   &   4   &   6\\
\hline
8&  0  &   0  &   0   &    2    &  3   &   4   &   4   &   6\\
\hline
\end{tabular}}
\caption{Price of Anarchy Contributions by Degree}
\label{fig:POAbyDegree}
\end{figure}

\section{Conclusion and Future Directions}
In this paper we have studied game theoretic explanations for the formation of networks. We have shown that networks with interesting structures may emerge as the result of interactive play between competitive individuals. We have also illustrated a method for computing the price of anarchy for two types of network games and illustrated the application of this calculation to kill or capture operations on violent extremist and criminal groups. We concluded by showing how simulation could be used for evaluating larger networks of interest when optimization problems become infeasible. There are several future directions of work.

We would like to investigate more complex and more realistic games that will, ideally produce more lifelike behavior. To do this, we must be able to infer the objective functions of the various players. In practice, these are never known and must be estimated, and it may be quite difficult to even estimate them.  It is simply not possible to perfectly observe an entire graph at a single instance in time.  Even if it were possible, relationships often cannot be characterized by a single binary variable.  Nonetheless, each time a graph is observed, information about the game can be extracted. Investigating the process of identifying these objective functions is fundamental to extending these techniques to real world situations.

In addition to this, more theoretical work can be done. Obtaining theoretical bounds on the price of anarchy and on the change in the price of anarchy as a result of vertex removal is in keeping with the literature on network formation games. In addition to this, investigation of more complex games in which constraints on players influence strategies is of interest. The generalized Nash equilibrium games \cite{FFP07} may be of interest in this case and will provide a richer context in which to model player behavior and therefore more requirements on our ability to infer player objectives and constraints from observed graphs.

\bibliographystyle{IEEEtran}
\bibliography{Biblio-Database2}

\begin{thebibliography}{10}
\providecommand{\url}[1]{#1}
\csname url@samestyle\endcsname
\providecommand{\newblock}{\relax}
\providecommand{\bibinfo}[2]{#2}
\providecommand{\BIBentrySTDinterwordspacing}{\spaceskip=0pt\relax}
\providecommand{\BIBentryALTinterwordstretchfactor}{4}
\providecommand{\BIBentryALTinterwordspacing}{\spaceskip=\fontdimen2\font plus
\BIBentryALTinterwordstretchfactor\fontdimen3\font minus
  \fontdimen4\font\relax}
\providecommand{\BIBforeignlanguage}[2]{{%
\expandafter\ifx\csname l@#1\endcsname\relax
\typeout{** WARNING: IEEEtran.bst: No hyphenation pattern has been}%
\typeout{** loaded for the language `#1'. Using the pattern for}%
\typeout{** the default language instead.}%
\else
\language=\csname l@#1\endcsname
\fi
#2}}
\providecommand{\BIBdecl}{\relax}
\BIBdecl

\bibitem{Doz11}
K.~Dozier, ``Cia following twitter, facebook,'' Associated Press:
  http://news.yahoo.com/ap-exclusive-cia-following-twitter-facebook-081055316.html,
  November 2011.

\bibitem{CSW05}
P.~J. Carrington, J.~Scott, and S.~Wasserman, \emph{{Models and Methods in
  Social Network Analysis}}.\hskip 1em plus 0.5em minus 0.4em\relax {Cambridge
  University Press}, 2005.

\bibitem{KY08}
D.~Knoke and S.~Yang, \emph{{Social Network Analysis}}, ser. Quantitative
  Applications in the Social Sciences.\hskip 1em plus 0.5em minus 0.4em\relax
  SAGE Publications, 2008, no. 154.

\bibitem{AH10}
S.~Asur and B.~A. Huberman, ``Predicting the future with social media,'' AxXiV:
  1003.5699v1, Tech. Rep., March 2010.

\bibitem{BDK07}
L.~Backstrom, C.~Dwork, and J.~Kleinberg, ``Wherefore art thou r3579x?
  anonymized social networks, hidden patterns and structural steganography,''
  in \emph{Proc. 16th World Wide Web Conference}, 2007.

\bibitem{FCF11}
A.~Friggeri, G.~Chelius, and E.~Fleury, ``Fellows: Crowd-sourcing the
  evaluation of an overlapping community model based on the cohesion measure,''
  in \emph{Proc. Interdisciplinary Workshop on Information and Decision in
  Social Networks}.\hskip 1em plus 0.5em minus 0.4em\relax Massachusetts
  Institute of Technology: Laboratory for Ilnformation and Decision Systems,
  May 31 - Jun 1 2011.

\bibitem{OT11}
A.~Olshevsky and J.~N. Tsitsiklis, ``Convergence speed in distributed consensus
  and averaging,'' \emph{SIAM Review}, vol.~53, no.~4, pp. 747--772, 2011.

\bibitem{JGN01}
E.~M. Jin, M.~Girvan, and M.~E.~J. Newman, ``Structure of growing social
  networks,'' \emph{Physical Review}, vol.~64, no. 046132, 2001.

\bibitem{SSG11}
A.~C. Squicciarini, S.~Sundareswaran, and C.~Griffin, ``A game theoretical
  perspective of users' registration in online social platforms,'' in
  \emph{Accepted to Third IEEE International Conference on Privacy, Security,
  Risk and Trust}, October 9 - 11 2011.

\bibitem{YD08}
W.~S. Yang and J.~B. Dia, ``Discovering cohesive subgroups from social networks
  for targeted advertising,'' \emph{Expert Syst. Appl.}, vol.~34, no.~3, pp.
  2029--2038, 2008.

\bibitem{barabasi1999a}
A.~Barabasi and R.~Albert, ``{Emergence of scaling in random networks},''
  \emph{Science}, vol. 286, no. 5439, p. 509, 1999.

\bibitem{newman2003}
M.~Newman, ``{The structure and function of complex networks},'' \emph{SIAM
  Review}, vol.~45, pp. 167--256, 2003.

\bibitem{dorogovtsev2002}
S.~Dorogovtsev and J.~Mendes, ``{Evolution of networks},'' \emph{Advances in
  Physics}, vol.~51, no.~4, pp. 1079--1187, 2002.

\bibitem{albert2002}
R.~Albert and A.~Barab{\'a}si, ``{Statistical mechanics of complex networks},''
  \emph{Reviews of modern physics}, vol.~74, no.~1, pp. 47--97, 2002.

\bibitem{doyle2005}
J.~Doyle, D.~Alderson, L.~Li, S.~Low, M.~Roughan, S.~Shalunov, R.~Tanaka, and
  W.~Willinger, ``{The "robust yet fragile" nature of the Internet},''
  \emph{Proceedings of the National Academy of Sciences of the United States of
  America}, vol. 102, no.~41, p. 14497, 2005.

\bibitem{myerson1977}
R.~Myerson, ``{Graphs and cooperation in games},'' \emph{Mathematics of
  Operations Research}, vol.~2, no.~3, pp. 225--229, 1977.

\bibitem{jackson1996}
M.~Jackson and A.~Wolinsky, ``{A strategic model of social and economic
  networks},'' \emph{Journal of economic theory}, vol.~71, no.~1, pp. 44--74,
  1996.

\bibitem{dutta1997}
B.~Dutta and S.~Mutuswami, ``{Stable networks},'' \emph{Journal of Economic
  Theory}, vol.~76, no.~2, pp. 322--344, 1997.

\bibitem{goyal2003}
S.~Goyal and S.~Joshi, ``{Networks of collaboration in oligopoly},''
  \emph{Games and Economic behavior}, vol.~43, no.~1, pp. 57--85, 2003.

\bibitem{LichterGriffinFriesz2011}
S.~Lichter, C.~Griffin, and T.~Friesz, ``A game theoretic perspective on
  network topologies,'' Submitted to Computational and Mathematical
  Organization Theory (under revision) [http://arxiv.org/abs/1106.2440],
  February 2011.

\bibitem{LGF11}
------, ``Link biased strategies in network formation games,'' in \emph{In
  Proc. 3rd IEEE Conference on Social Computing}, MIT, Boston, MA, October 9-11
  2011.

\bibitem{bala2000}
V.~Bala and S.~Goyal, ``{A noncooperative model of network formation},''
  \emph{Econometrica}, vol.~68, no.~5, pp. 1181--1229, 2000.

\bibitem{goyal2006}
S.~Goyal and S.~Joshi, ``{Unequal connections},'' \emph{International Journal
  of Game Theory}, vol.~34, no.~3, pp. 319--349, 2006.

\bibitem{jackson2005}
M.~Jackson and A.~Van Den~Nouweland, ``{Strongly stable networks},''
  \emph{Games and Economic Behavior}, vol.~51, no.~2, pp. 420--444, 2005.

\bibitem{Dies10}
R.~Diestel, \emph{Graph Theory}, 4th~ed., ser. Graduate Texts in
  Mathematics.\hskip 1em plus 0.5em minus 0.4em\relax Springer, 2010.

\bibitem{GR01}
C.~Godsil and G.~Royle, \emph{Algebraic Graph Theory}.\hskip 1em plus 0.5em
  minus 0.4em\relax Springer, 2001.

\bibitem{GY05}
J.~Gross and J.~Yellen, \emph{Graph Theory and its Applications}, 2nd~ed.\hskip
  1em plus 0.5em minus 0.4em\relax Boca Raton, FL, USA: CRC Press, 2005.

\bibitem{jackson2003}
M.~Jackson, ``{A survey of models of network formation: Stability and
  efficiency},'' \emph{Game Theory and Information}, 2003.

\bibitem{belleflamme2004}
P.~Belleflamme and F.~Bloch, ``{Market sharing agreements and collusive
  networks},'' \emph{International Economic Review}, vol.~45, no.~2, pp.
  387--411, 2004.

\bibitem{jackson2008}
M.~Jackson, \emph{{Social and economic networks}}.\hskip 1em plus 0.5em minus
  0.4em\relax Princeton University Press, 2008.

\bibitem{KP99}
E.~Koutsoupias and C.~H. Papadimitriou, ``Worst-case equilibria,'' in \emph{In
  Proc. STACS 99}, 1999.

\bibitem{dubey1986}
P.~Dubey, ``Inefficiency of nash equilibria,'' \emph{Mathematics of Operations
  Research}, pp. 1--8, 1986.

\bibitem{Ros73}
\BIBentryALTinterwordspacing
R.~W. Rosenthal, ``A class of games possessing pure-strategy nash equilibria,''
  \emph{International Journal of Game Theory}, vol.~2, pp. 65--67, 1973,
  10.1007/BF01737559. [Online]. Available:
  \url{http://dx.doi.org/10.1007/BF01737559}
\BIBentrySTDinterwordspacing

\bibitem{GK05}
\BIBentryALTinterwordspacing
G.~Christodoulou and E.~Koutsoupias, ``The price of anarchy of finite
  congestion games,'' in \emph{Proceedings of the thirty-seventh annual ACM
  symposium on Theory of computing}, ser. STOC '05.\hskip 1em plus 0.5em minus
  0.4em\relax New York, NY, USA: ACM, 2005, pp. 67--73. [Online]. Available:
  \url{http://doi.acm.org/10.1145/1060590.1060600}
\BIBentrySTDinterwordspacing

\bibitem{GTR11}
C.~Griffin, K.~Testa, and S.~Racunas, ``An algorithm for searching an
  alternative hypothesis space,'' \emph{IEEE Trans. Sys. Man and Cyber. B},
  vol.~41, no.~3, pp. 772--782, 2011.

\bibitem{britton2006}
T.~Britton, M.~Deijfen, and A.~Martin-L{\\"o}f, ``{Generating simple random
  graphs with prescribed degree distribution},'' \emph{Journal of Statistical
  Physics}, vol. 124, no.~6, pp. 1377--1397, 2006.

\bibitem{milo2003}
R.~Milo, N.~Kashtan, S.~Itzkovitz, M.~Newman, and U.~Alon, ``{On the uniform
  generation of random graphs with prescribed degree sequences},'' \emph{Arxiv
  preprint cond-mat/0312028}, 2003.

\bibitem{del2010}
C.~Del~Genio, H.~Kim, Z.~Toroczkai, and K.~Bassler, ``{Efficient and exact
  sampling of simple graphs with given arbitrary degree sequence},'' \emph{PloS
  one}, vol.~5, no.~4, p. e10012, 2010.

\bibitem{mihail2002}
M.~Mihail and N.~Vishnoi, ``{On generating graphs with prescribed vertex
  degrees for complex network modeling},'' \emph{ARACNE 2002}, pp. 1--11, 2002.

\bibitem{FFP07}
F.~Facchinei, A.~Fischer, and V.~Piccialli, ``On generalized {N}ash games and
  variational inequalities,'' \emph{Operations Research Letters}, vol.~35, pp.
  159--164, 2007.

\end{thebibliography}

\end{document}